\documentclass[12pt]{article}

\usepackage{amssymb,amsmath,amsfonts,amssymb}
\usepackage{graphics,graphicx,color,hhline}
\usepackage{bbm}
\usepackage{stmaryrd}
 \usepackage{mathtools}

\usepackage{authblk}
\usepackage{euscript}

\def\@abssec#1{\vspace{.05in}\footnotesize \parindent .2in 
{\bf #1. }\ignorespaces} 

\graphicspath{{/EPSF/}{Figures/}}   

\setlength\textwidth{37.2pc}
\setlength\textheight{56pc}
\setlength\topmargin{-12pt} 
\addtolength\oddsidemargin{-1.2cm}
\addtolength\evensidemargin{-1.2cm}
\addtolength\topmargin{-1.5cm}

\newtheorem{theorem}{Theorem}[section]
\newtheorem{lemma}[theorem]{Lemma}
\newtheorem{proposition}[theorem]{Proposition}
\newtheorem{corollary}[theorem]{Corollary}

\def \Rm {\mathbb R}
\def \Nm {\mathbb N}

\def \Cm {\mathbb C}
\def \Zm {\mathbb Z}

\def \PP {\mathbb P}

\def\un{{\mathbbmss{1}}} 

\newcommand{\eps}{\varepsilon}
\newcommand{\E}{\mathbb E}

\newcommand{\be}{\begin{equation}}
\newcommand{\ee}{\end{equation}}
\newcommand{\bea}{\begin{eqnarray}}
                    \newcommand{\h}{\textrm{h}}
                     \newcommand{\inc}{\textrm{inc}}
\newcommand{\eea}{\end{eqnarray}}
\newcommand{\bee}{\begin{eqnarray*}}
\newcommand{\eee}{\end{eqnarray*}}

\def\fref#1{{\rm (\ref{#1})}}

\newcommand{\calB}{\mathcal B}
\newcommand{\calC}{\mathcal C}

\newcommand{\calL}{\mathcal L}

\newcommand{\calP}{\mathcal P}

\newcommand{\calG}{\mathcal G}

\newcommand{\calA}{\mathcal A}
\newcommand{\calD}{\mathcal D}
\newcommand{\calT}{\mathcal T}
\newcommand{\frakG}{\mathfrak{G}}

\newcommand{\cout}[1]{}

\begin{document}
\title{Scaling limit of fluctuations for high contrast stochastic homogenization of the Helmholtz equation:\\
  second order moments}
\author[,1]{Olivier Pinaud \footnote{pinaud@math.colostate.edu}}
 \affil[1]{Department of Mathematics, Colorado State University, Fort Collins CO, 80523}
\maketitle

\begin{abstract}
  This work is concerned with the high contrast stochastic homogenization of the Helmholtz equation. Our goal is to characterize the second order moments of the scaling limit of the fluctuations of the wavefield. We show that these moments are those of a random wavefield solution to a homogenized Helmholtz equation with a white noise source term and obtain expressions for its variance. Two factors contribute to the white noise: fluctuations in the inverse permittivity of the high contrast inhomogeneities, and fluctuations in their size. This problem is motivated by wave propagation in sea ice, which is a random compositive of ice and pockets of air and brine. The analysis hinges on three ingredients: a covariance formula due to Chatterjee for functions of independent random variables; small-volume expansions to quantify the fluctuations due to one inclusion; and the standard two-scale expansions for stochastic homogenization.
  \end{abstract}
\section{Introduction}

We consider in this work the following wave propagation problem: an incoming wave, say a plane wave, is impinging on a bounded, simply connected domain $\calB \subset \Rm^3$, the latter consisting of a smooth background and of randomly located inclusions with large constrast. Assuming the wavelength $\lambda $ is both much larger than the typical size $\ell$ of the inclusions and much smaller than the diameter $d$ of $\calB$, the wave is, in first approximation, reflected and transmitted according to Snell-Descartes laws at the boundary of $\calB$. This is a double consequence of geometrical optics and homogenization: since $\lambda \gg \ell$, and provided the random medium satisfies appropriate conditions, the (stochastic) homogenization regime holds and the random medium is seen by the wave as an effective, constant and deterministic medium; the high frequency condition $d \gg \lambda$ then allows for the use of Snell-Descartes laws.

The situation described here is motivated by the sea ice problem, where a plane flying over sea ice and equipped with a radar system emits pulses probing the sea ice, in particular its depth. Sea ice is a composite of an ice background and random pockets of air and brine, the latter having a large permittivity compared to the background. See \cite{JP-DCDS} for experimental values. Since the pulses impinge on the sea ice with a non-normal incidence, the leading contribution to the wavefield is scattered away by the sea ice surface, so that the backscattering measured at the plane is very small and consists of corrections to the homogenization and high frequency asymptotic regimes. The objective of this work is to characterize the corrections to the high contrast stochastic homogenization approximation.

In order to focus on the corrector problem, we model for simplicity wave propagation by the Helmholtz equation rather than with Maxwell's equations. We borrow the high contrast homogenization setting of \cite{bouchitte2015resonant}: with $\eta=\ell/\lambda \ll 1$, we denote by $a_\eta$ the inverse relative permittivity in the entire space $\Rm^3$ and by $u_\eta$ the magnetic field. The rescaled Helmholtz equation, equipped with Sommerfeld radiation condition at infinity, then reads
 \begin{equation} \label{eqhelm}
\left\{
  \begin{array}{l}
    \nabla \cdot (a_\eta \nabla u_\eta) + k_0^2 u_\eta=f, \qquad \textrm{in  } \Rm^3\\
    (\partial_r-ik_0)(u_\eta-u_{\rm{inc}})=O(\frac{1}{r^{2}}), \qquad \textrm{as} \quad r=|x| \to \infty
  \end{array}
  \right.
  \end{equation}
  where $u_{\rm{inc}}$ is the incoming wave and is such that $\Delta u_{\rm{inc}}+k_0^2u_{\rm{inc}}=f$ over $\Rm^3$, with $f$ a source term supported in $\calB^c=\Rm^3\backslash \overline{\calB}$ ($\overline{\calB}$ denotes the closure of the open domain $\calB$). The number $k_0>0$ is the rescaled nondimensional wavenumber and is of order one. As is customary in probability theory, we will use the variable $\omega$ to denote a realization of the random medium, so that in this paper $\omega$ is not the angular frequency of the wave. We denote by $\calD_\eta(\omega)$ the random domain occupied by the inclusions and by $\eta^2 a_{\eta,\inc}(x,\omega)$ their inverse relative permittivity (we will give a precise definition of $a_{\eta,\inc}$ in Section \ref{setting}). The factor $\eta^2$ is reminiscent of the large-contrast property and is such that the so-called optical diameter of the inclusions is independent of $\eta$, see \cite{bouchitte2015resonant}. We then have
  $$
  a_\eta(x,\omega)=a(x)\un_{\Rm^3 \backslash \overline{\calD_\eta}(\omega)}(x)+\eta^2 a_{\eta,\inc}(x,\omega)\un_{\calD_\eta(\omega)}(x),
  $$
  where $\un_A$ denotes the characteristic function of a set $A$. Above, $a(x)$ is the background inverse relative permittivity, equal to one for simplicity outside of $\calB$ and to $a_b$ in $\calB$, that is $a(x)=1$ for $x\in \Rm^3\backslash \overline{\calB}$ and $a(x)=a_b$ for $x \in \calB$. A depiction of the random domain is given in Figure \ref{fig:domain} further.

  Standard homogenization theory yields that $u_\eta$ converges in appropriate sense as $\eta \to 0$ to the homogenized field $u_{\rm{h}}$ solution to
   \begin{equation} \label{eqhomo}
\left\{
  \begin{array}{l}
    \nabla \cdot (A_{\rm{eff}}(x) \nabla u_{\textrm{h}}) + k_0^2 \mu_{\rm{eff}}(x)u_{\rm{h}}=f, \qquad \textrm{in  } \Rm^3\\
    (\partial_r-ik_0)(u_{\rm{h}}-u_{\rm{inc}})=O(\frac{1}{r^{2}}), \qquad \textrm{as} \quad r=|x| \to \infty.
  \end{array}
  \right.
\end{equation}
Above, $A_{\rm{eff}}(x)=1$, $\mu_{\rm{eff}}(x)=1$ for $x\in \Rm^3\backslash \overline{\calB}$, and $A_{\rm{eff}}(x)=A_{\rm{eff}} \in \Rm^{3 \times 3}$, $\mu_{\rm{eff}}(x)=\mu_{\rm{eff}} \in \Cm$ for $x \in \calB$. The constant coefficients $A_{\rm{eff}}$ and $\mu_{\rm{eff}}$ are the inverse relative permittivity and permeability of the effective medium of propagation. Their expressions are given in Section \ref{setting}. Note the presence of the artificial magnetic permeability $\mu_{\textrm{eff}}$ which has interesting practical applications \cite{felbacq2005negative}. The homogenization limit is established rigorously in \cite{bouchitte2015resonant} in the two dimensional case using stochastic two-scale convergence. See as well \cite{cherdantsev2019stochastic} for the stochastic homogenization of elliptic high contrast operators. 

We are interested in this work in the rescaled random fluctuation $v_\eta=(u_\eta-\E\{u_\eta\})/\eta^{3/2}$ as $\eta \to 0$, which is the quantity measured at the plane since the backscattered part in $\E\{u_\eta\}$ is negligible as claimed earlier. While the stochastic homogenization theory is now well-established, a theory of the fluctuations remained an outstanding problem for years and was only successfully developped by various authors in the last decade, see \cite{armstrong2016quantitative,armstrong2017additive,armstrong2019quantitative,gu2016scaling,duerinckx2020structure,duerinckx2020robustness,gloria2015corrector}. We focus in this article on the second order moments of $v_\eta$, which is already a quite difficult problem, even informally, while the full statistical characterization of $v_\eta$ will be addressed in a separate work. We remain at an informal level, a rigorous mathematical theory is extremely involved and beyond the scope of this work where our goal is to identify the limit of $v_\eta$ and the associated stochastic PDE that can be used in turn for the resolution of inverse problems.

It is expected that $v_\eta$ is asymptotically characterized by a Gaussian field with a covariance function that we explicitly derive in this work (the limit is related to variants of Gaussian free fields, see e.g. \cite{armstrong2019quantitative}). We obtain that the limit of the variance of $v_\eta$ as $\eta \to 0$ is equal to the variance of a random field $v$ which verifies the following SPDE, equipped with homogeneous Sommerfeld radiation conditions,
\be  \label{eqv}
                 \nabla_x \cdot \big(A_{\rm{eff}}(x)\nabla_x v\big) + k_0^2 \mu_{\rm{eff}}(x)v=-a_b \nabla_x \cdot \big(\un_{\calB}W^\theta\nabla_x u_{\textrm{h}}\big)-k_0^2 \un_{\calB}(N^\theta+N^a)u_{\textrm{h}}, \quad \textrm{in  } \Rm^3, 
                 \ee
                 where $W^\theta$ is a matrix-valued (Gaussian) white noise, and $N^\theta$, $N^a$ are scalar (Gaussian) white noise.
                 
                 There are different approaches to study the fluctuations $v_\eta$, and we will follow that of \cite{gu2016scaling}. The method is based on primarily two ingredients: (i) a formula for the covariance of functions of random variables, and (ii) two-scale expansions of $u_\eta$. In the context of Gaussian random fields as in \cite{gu2016scaling}, the covariance formula is given by the Helffer-Sj\"ostrand formula. It does not apply here in our setting of high contrast homogenization. We will then instead use a formula introduced by Chatterjee in \cite{chatterjee2008new} that applies to functions of independent random variables (the inclusions are indeed independent). In addition to these two ingredients, we will need small volume expansions in the spirit of \cite{ammari2007polarization,AmMoskVog02,CMV-IP,BP-AA} to characterize the effect of one inclusion on the  wavefield. Establishing the Gaussian property of the limiting field $v$ will be treated elsewhere and requires additional probabilistic tools such as the so-called second order Poincar\'e inequality \cite{chatterjee2008new}.

                 A similar question as the one treated in this paper is addressed in \cite{JP-DCDS} in a much simpler setting where the randomness is in the refractive index and therefore in the lower order term in the Helmholtz equation. A fully rigorous characterization of the scaling limit is possible in this case and is based on iterated integrals.


                 The paper is structured as follows: we introduce the homogenization setting, two-scale expansions and correctors in Section \ref{setting}; Section \ref{mainresult} is dedicated to our main result and Section \ref{seccov} to the covariance formula. Small-volume expansions are derived in Section \ref{secSV}, and the asymptotic expression of the variance is obtained in Section \ref{secvar}. A formal derivation of the two-scale expansion is proposed in an Appendix, as well as results of Nash-Aronson type for Green's functions in perforated domains used in the proof.
                 \paragraph{Acknowledgment.} The author acknowledges support from NSF grant DMS-2006416 and would like to thank Pr. Margaret Cheney for introducing him to the sea ice problem. 

\section{Homogenization setting} \label{setting}

\subsection{Probabilistic framework}
We follow almost verbatim the setting of \cite{bouchitte2015resonant}. The homogenization result holds when the inclusions do not overlap, which is achieved by partitioning $\Rm^3$ into identical cubes and by placing randomly in each cube an inclusion at a non-zero distance to the boundary. Each inclusion is associated with an index $j \in \Zm^3$ and is characterized by its position $\theta_j$ in the unit cube $Z=(0,1)^3$, by its radius $\rho_j \in [0,1/2]$, and by its inverse permittivity $a_j \in \Cm^+$, for $\Cm^+$ the set of complex numbers with a strictly positive imaginary part (the permittivity of brine is indeed in $\Cm^+$).

The probabilistic framework is the following. For $\xi$ a fixed parameter such that $0<\xi <1/2$, let
$$
S_0=\left\{ (\theta,\rho) \in Z \times [0,1/2]: \textrm{dist}(\theta,\partial Z)>\rho+\xi \right\}.
$$
Let in addition $S=S_0 \times \Cm^+$, $\Omega=S^{\Zm^3}$, and the sample space $\widetilde \Omega$ is defined by $\widetilde{\Omega}= \Omega \times Z$. We denote by $\nu_0$ and $\nu_1$ Borel probability measures on $S_0$ and $\Cm^+$ such that $\nu_1$ has a compact support. We set $\nu=\nu_0 \otimes \nu_1$. An element $\omega \in \widetilde{\Omega}$ then reads $\omega=\big((\theta_j,\rho_j,a_j)_{j \in \Zm^3},z\big)$. We will use the shorthand $m=(m_j)_{j \in \Zm^3}$, with $m_j=(\theta_j,\rho_j,a_j)$, and will denote by $m_j(\omega)$ and $z(\omega)$ the components associated with an event $\omega=(m,z)$. Denoting by $\calL$ the Lebesgue measure on $\Rm^3$, we introduce the probability measures $\PP=\otimes_{\Zm^3} \nu$ on $\Omega$ and $\calP= \PP \otimes \calL$ on $\widetilde \Omega$, with event space $\calA=\otimes_{\Zm^3} \calT(S)$ for $\PP$ and $ \calA \otimes \calT(Z)$ for $\calP$. In the previous statements, $\calT(A)$ denotes the Borel sigma-algebra generated by the set $A$.  With these definitions, the triples $(\theta_j(\omega),\rho_j(\omega),a_j(\omega))$ are independent and identically distributed. For the corrector theory, we will actually mostly work with the probability space $(\Omega,\calA,\PP)$ and not with $(\widetilde{\Omega},\calA \otimes Z,\calP)$ since the variable $z$ will be fixed. The symbol $\E$ will denote both expectation w.r.t. $\PP$ or w.r.t. $\PP \otimes \PP$ when doubling the variables for the covariance formula further. Expectation w.r.t. $\calP$ is denoted $\E_\calP$.

Homogenization occurs under the conditions that the random field is stationary and ergodic. To this end, let, for $x \in \Rm^3$, $T_x : \widetilde{\Omega} \to \widetilde{\Omega}$ be the transformation defined by
$$
T_x \omega=T_x \big((m_j)_{j \in \Zm^3},z\big)=\big((m_{j+[z+x]})_{j \in \Zm^3},z+x-[z+x]\big),
$$
where, with an abuse of notation, $[x]$ is a vector containing the greatest integer parts (i.e. the floor function) of the coordinates of $x$. The map $(x,\omega)\to T_x(\omega)$ preserves the measure $\calP$ and is ergodic \cite{bouchitte2015resonant}.

We now introduce the random domain. We denote by $B(\theta,\rho)$ the open ball centered at $\theta$ of radius $\rho$. For $j \in \Zm^3$, let $D^j(\omega)=B(j-z(\omega)+\theta_j(\omega),\rho_j)$, which can be recast as
$$D^j(\omega)=\left\{x\in \Rm^3: [x+z(\omega)]=j\quad \textrm{and} \quad  x-j+z(\omega) \in B(\theta_0(T_x\omega),\rho_0(T_x \omega))\right\}.$$
We denote by $D_\eta^j(\omega)=\eta D^j(\omega)$ the rescaled version of $D^j(\omega)$. 
The random domain occupied by the inclusions is then
  $$
  D_\eta(\omega)=\bigcup_{j \in J_\eta(\omega)}D_\eta^j(\omega)
  $$
  where
  $$J_\eta(\omega)=\left\{ j \in \Zm^3:\; \eta(j-z(\omega)+Z) \subset \calB\right\}.
  $$
  Note that we considered spherical inclusions for simplicity, but the theory can be extended almost directly to other shapes, see e.g. \cite{cherdantsev2019stochastic}. With such definitions, the inverse relative permittivity of the $j-$th inclusion is then $a_{\eta,\inc}(x,\omega)=a_j(\omega)=a_0(T_x \omega)$ when $x \in D_\eta^j(\omega)$, and the overall random field reads
 $$
  a_\eta(x,\omega)=a(x)\un_{D_\eta^c(\omega)}(x)+\sum_{j \in J_\eta(\omega)} \eta^2 a_j(\omega)\un_{D_\eta^j(\omega)}(x)
  $$
  where $D_\eta^c(\omega)=\Rm^3 \backslash \overline{D_\eta}(\omega)$. A 2D version of the random domain is depicted in Figure \ref{fig:domain}.
  
  \begin{figure}[h!]
\begin{center}
\includegraphics[scale=0.25]{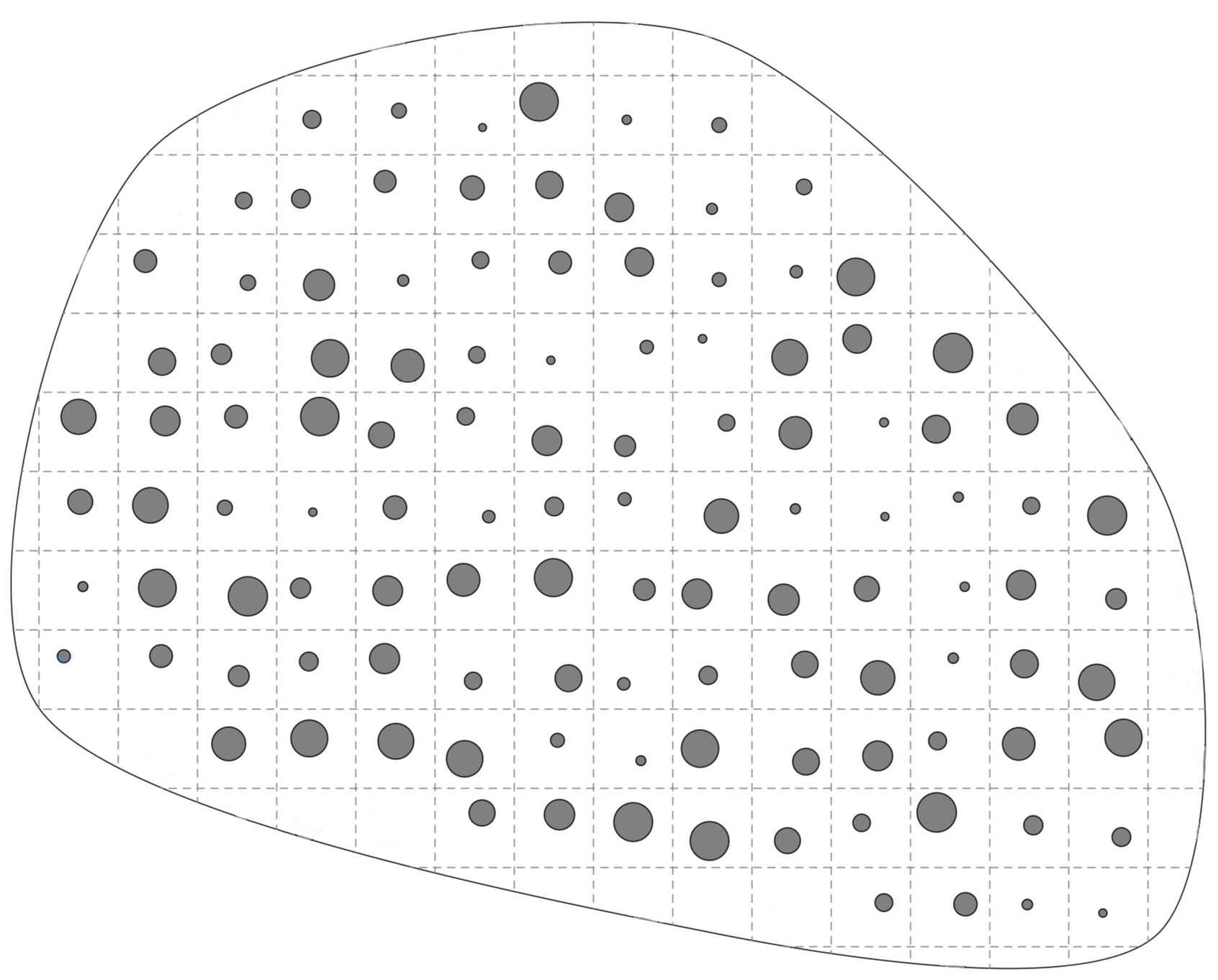} 
\end{center}
\caption{Random domain in 2D. The side of the squares is $\eta$, and only cells fully enclosed in $\calB$ contain inclusions. The background has inverse permittivity of order $O(1)$, while that of the inclusions is of order $O(\eta^2)$. }
\label{fig:domain}
\end{figure}
\subsection{Two-scale expansion and correctors} An important part of the analysis relies on the two-scale expansion of the solution $u_\eta$ to \fref{eqhelm}. With $D(\omega)=\bigcup_{j \in \Zm^3} D^j(\omega)$, the  two-scale expansion for $u_\eta$ is
    \be \label{2sca}
    u_\eta(x,\omega)=u_0(x,x/\eta,\omega)+\eta u_1(x,x/\eta,\omega)+O(\eta^2).
    \ee
    The informal derivation of \fref{2sca} is classical and is detailed in Appendix \ref{2scale}. With $y=x/\eta$, the leading term $u_0$ reads $u_0(x,y)=u_{\rm{h}}(x) \Lambda(y,\omega)$, where $u_{\rm{h}}$ is the homogenized solution satisfying \fref{eqhomo} and $\Lambda(y,\omega)$ is a random function solution to
    \be \label{Lamb}
    a_j(\omega) \Delta_y \Lambda(y,\omega)+k_0^2 \Lambda(y,\omega)=0 \qquad \textrm{on} \quad D^j(\omega),\qquad j \in \Zm^3,
    \ee
    with the boundary condition $\Lambda =1$ on $\partial D^j(\omega)$, and $\Lambda$ extended to 1 on $D(\omega)^c$. The function $\Lambda$ is stationary in the sense that $\Lambda(y+z,\omega)=\Lambda(y,T_z \omega)$, $\forall z \in \Rm^3$. 

    The first-order term $u_1(x,y,\omega)$ reads $u_1(x,y,\omega)= \phi(y,\omega) \cdot \nabla u_{\rm{h}}(x)$, where the corrector $\phi$ is given by $\phi=(\phi_1,\cdots,\phi_3)$ with
$$
      \Delta_y \phi_i=0 \qquad \textrm{on} \quad D(\omega)^c,
$$
      equipped with the boundary conditions
      $$
      n^j \cdot \nabla_y \phi_i=- e_i \cdot n^j \qquad \textrm{on} \qquad  \partial D^j(\omega).
      $$
      Above, $n^j$ is the outward normal to $\partial D^j(\omega)$ and $e_i$ is a vector of the canonical basis of $\Rm^3$. 
     The corrector $\phi$ is defined rigorously following the theory of homogenization in perforated domains, see e.g. \cite{jikov2012homogenization}, Chapter 8. Consider first the set
      $$
      \Sigma=\big\{\omega \in \widetilde{\Omega}: |\theta_0(\omega)-z(\omega)|<\rho_0(\omega)\big\}.
      $$
It can be checked that the domain $D(\omega)$ defined earlier can be expressed as $D(\omega)=\{x \in \Rm^3: T_x \omega \in \Sigma\}$. For $i=1,\cdots,3$ and $y \in D(\omega)$, the gradient $\nabla \phi_i (y,\omega)$ has to be replaced by the stationary vector field $v_i(T_y\omega)$, that satisfies
      $$
      \nabla_y \cdot \left(\un_{D(\omega)^c} (e_i+v_i(T_y \omega))\right)=0.
      $$
      The vector field $v_i$ is such that there exists $v_i^\delta(\omega)$ with $\E_\calP\{|v_i^\delta|^2\}<\infty$, $\E_\calP\{v_i^\delta\}=0$, $\nabla_y \times v^\delta (T_y \omega)=0$ for all $y \in \Rm^d$, and such that
      \be \label{limv}
      \lim_{\delta \to 0} \int_{\Sigma^c} |v_i(\omega)-v_i^\delta(\omega)|^2 d\calP(\omega)=0.
      \ee
      Note that $v_i$ is only defined on $\Sigma^c$ and that, according to Birkhoff ergodic theorem, see \cite{jikov2012homogenization}, Theorem 7.2, \fref{limv} and $\E_\calP\{|v_i^\delta|^2\}<\infty$, we have the growth condition
  $$    \lim_{R \to \infty} \frac{\int_{B_R } \un_{D(\omega)^c}(y)|v_i (T_y \omega)|^2 dy}{|B_R|}< + \infty,
$$
      where $B_R$ is the ball of radius $R$ centered at $0$ and $|B_R|$ its volume. Denote by $\phi^\delta_i(y,\omega)$ a potential function for $v_i^\delta(T_y \omega)$ (which exists since $\nabla_y \times v^\delta (T_y \omega)=0$ for all $y \in \Rm^3$). The condition \fref{limv} shows that, for all $y \in D^c(\omega)$,
      \be \label{lim2E} \lim_{\delta \to 0} \E_\calP |v_i(T_y \omega)-\nabla \phi_i^\delta(y,\omega)|^2=0. \ee
      Since all calculations in this work are formal, we will keep using for simplicity $\nabla \phi_i(y,\omega)$ for $v_i(T_y \omega)$, instead of working with $\nabla \phi_i^\delta$ and repeatedly invoking \fref{lim2E} to pass to the limit.
      %
      

With $\Lambda$ and $\nabla \phi$ at hand, the effective coefficients have the following expressions. We have first
         $$
      A_{\rm{eff}}=a_b \int_{\Sigma^c}(I+(v_{1}(\omega),\cdots,v_{3}(\omega))d\calP(\omega),
      $$
where $I$ is the $3 \times 3$ identity matrix. The matrix $A_{\rm{eff}}$ is symmetric positive definite but in general not proportional to the identity \cite{bouchitte2015resonant}. Besides, equation \fref{Lamb} can be solved exactly by projection on the eigenfunctions $(u_n)_{n \in \Nm}$  of the Dirichlet Laplacian on the unit sphere $S_2$. Denoting by $(\lambda_n)_{n \in \Nm}$ the corresponding (nonpositive) eigenvalues, we find
      \bee
      \Lambda(y,\omega)&=&1-\sum_{j \in \Zm^3} \un_{D^j(\omega)} (y)\sum_{n \in \Nm}  \frac{c_n k_0^2 \rho_j^2(\omega)}{k_0^2\rho_j^2(\omega)+a_j(\omega) \lambda_n} u_n \left(\frac{y+z(\omega)-\theta_j(\omega)}{\rho_j(\omega)}\right),
      \eee
      with $c_n=\int_{S_2} u_n dy$. The function $\Lambda$ is stationary, and as a consequence its expectation does not depend on $y$. A direct calculation finally shows that 
      $$
      \mu_{\rm{eff}}=\E_\calP\{\Lambda(y,\cdot)\}=1-\sum_{n \in \Nm} c_n^2 k_0^2 \E_\calP\left\{\frac{ \rho^4(\omega)}{k_0^2\rho^2(\omega)+a(\omega) \lambda_n}\right\}.
      $$

The functions $\Lambda$ and $\phi$ are sufficient for the homogenization theory. For the scaling limit, we  need in addition the following modified corrector $\phi^{j,-}=(\phi_1^{j,-},\cdots,\phi_3^{j,-})$, obtained by removing an inclusion in the cell $j$. For $j \in \Zm^3$, let first
  $$  \calD_j(\omega)=\bigcup_{k \in \Zm^3, k \neq j}D^k(\omega),
$$
    and for $i=1,\cdots,3$, let $\phi^{j,-}_i$ such that 
     \be \label{corr1m}
      \Delta_y \phi_i^{j,-}=0 \qquad \textrm{on} \quad \calD_j(\omega)^c,
      \ee
      with the boundary condition, for $\ell \neq j$,
      \be \label{corr2m}
      n^\ell \cdot \nabla_y \phi_i^{j,-}=- e_i \cdot n^\ell \qquad \textrm{on} \qquad  \partial D^\ell(\omega).
      \ee
     
      The construction of the corrector $\phi$ hinges on the probabilistic framework and the stationarity of its gradient. The gradient of the modified corrector $\phi^{j,-}$ cannot be stationary since there is no inclusion in the cell $j$ and the probabilistic approach cannot be used directly. A simple way to obtain a solution is by perturbation of $\nabla \phi$ and by imposing the condition at the infinity
      \be \label{corr3m}
            \lim_{|y| \to \infty} \un_{\calD_j(\omega)^c}(y)|\nabla \phi_i^{j,-}(y,\omega)- \nabla \phi(y,\omega)|=0.
      \ee

    We show informally in Appendix \ref{phijm} that there is a unique $\phi_i^{j,-}$, up to an additive constant, satisfying \fref{corr1m}-\fref{corr2m}-\fref{corr3m}. Note, as expected, and even though it is not apparent from \fref{corr3m}, that $\phi_i^{j,-}$ does not depend on the characteristics of the inclusion in cell $j$. We have indeed, see further \fref{Dphi}-\fref{DE}, that
      $$
       \lim_{|y| \to \infty} \un_{\calD_j(\omega)^c}(y)|\nabla \phi_i(y,\omega)- \nabla \phi(y,\omega^j)|=0,
       $$
       where the component $j$ in $\omega^j$ is replaced by an independent copy and is therefore independent of $m_j$.
      %
Note finally that the functions $\nabla \phi$ and $\nabla \phi^-$ are real-valued while $\Lambda$ is complex-valued. 

  \section{Main result} \label{mainresult}
  Let $g$ be a real-valued smooth function with compact support and supported in $\calB^c$, and let $(f,g)=\int_{\Rm^3} f^*(x) g(x) dx$ where $f^*$ is the complex conjugate of $f$ (we will also occasionally denote the complex conjugate by $\overline{f}$ when the notation is more appropriate). We are interested in the fluctuation
  $$U_\eta(g)=(g,u_\eta-\E\{u_\eta\})/\eta^{3/2}.
  $$
  Note that expectation is taken w.r.t. to $\PP$, so that $z$ is fixed here. This is motivated by the sea ice problem: it is not possible to estimate $\E_\calP\{u_\eta \}$ by empirical averages since measurements are performed for only one realization of the random variable $z$, while it is possible to estimate $\E\{u_\eta \}$ for one realization of the random field by slicing the random domain into several slabs and invoking the independence of the inclusions. We will see that while $U_\eta(g)$ depends on $z$, the limit of its variance does not which justifies our approach.

  We need to introduce a few more notations before stating our result. For $\omega=(m,0)$ with $m=(m_j)_{j \in \Zm^3}=(\theta_j(\omega),\rho_j(\omega),a_j(\omega))_{j \in \Zm^3}$, we denote for simplicity the corrector $\phi(y,\omega)$ by $\phi(y,m)$. In the same way, the modified corrector with no inclusion in the cell labeled zero, $\phi^{0,-}(y,\omega)$, is denoted by $\phi^-(y,m)$. Note that $\phi^-$ does not depend on $m_0$. For $\ell, k=1,2,3$, let now
 \bea \label{defMF}
 M_{\ell,k}(m)&=&\int_{\partial B(\theta_0(m),\rho_0(m))} n \cdot \big[\nabla y_\ell + \nabla \phi^{-}_\ell(y,m)  \big] \big[y_k+\phi_k (y,m)) \big] dS(y)\\
 N(m)&=&\int_{B(\theta_0(m),\rho_0(m))} \left[1+\Lambda(y,m)\right] dy. \label{defNF}
        \eea
        Above, $dS$ is the surface measure on $\partial B(\theta_0(m),\rho_0(m))$, and $n$ denotes the outward normal to $\partial B(\theta_0(m),\rho_0(m))$. We will see in Section \ref{secvar} that the matrix $M$ is uniquely defined even though $\phi$ is defined up to an additive constant. Note that $M$ is real-valued while $N$ is complex-valued. With $\Zm^3_\star=\Zm^3 \backslash \{0\}$, consider now
        \bee\langle M \rangle_{(\theta_0,\rho_0)}  &=&\int_{S_0^{\Zm^3_\star}} M(m) \otimes_{j \in \Zm^3_\star} d \nu_0(\theta_j,\rho_j) \\
 \langle M \rangle &=&\int_{S_0^{\Zm^3}} M(m) \otimes_{j \in \Zm^3} d \nu_0(\theta_j,\rho_j),
 \eee
 namely $\langle M \rangle$ is the average of $M$ over all $(\theta_j(\omega),\rho_j(\omega))_{j \in \Zm^3}$ keeping $(a_j)_{j \in \Zm^3}$ fixed, while $\langle M \rangle_{(\theta_0,\rho_0)}$ is the average over $(\theta_j(\omega),\rho_j(\omega))_{j \in \Zm^3_\star}$ keeping $(a_j)_{j \in \Zm^3}$ and $(\theta_0, \rho_0)$ fixed. Let furthermore
  \bee 
 \calC^W &=& \E \big \{ (\langle M \rangle_{(\theta_0,\rho_0)} -\langle M \rangle)\otimes (\langle M \rangle_{(\theta_0,\rho_0)} -\langle M \rangle)\big\}\\
 \calC^{W,N} &=& \E \big \{ ( \langle M\rangle -\langle M \rangle_{(\theta_0,\rho_0)})(\langle N \rangle_{(\theta_0,\rho_0)}-\langle N \rangle ) \big\}\\
 \calC^{\theta} &=&  \E \big \{ (\langle N \rangle_{(\theta_0,\rho_0)} -\langle N \rangle)(\langle N \rangle_{(\theta_0,\rho_0)} -\langle N \rangle) \big\}\\
 \calC^{\theta,*} &=& \E \big \{( \langle N \rangle_{(\theta_0,\rho_0)} -\langle N \rangle)(\langle \overline{N} \rangle_{(\theta_0,\rho_0)}-\langle \overline{N} \rangle)\big\}\\
 \calC^{a} &=& \E \big \{ (\E \{ N | a_0 \}-\E\{N\})(\E \{N | a_0 \} -\E\{N\})\big\}\\
  \calC^{a,*} &=&  \E  \big \{( \E \{ N | a_0 \}-\E\{N\})(\E \{\overline{N} | a_0 \}-\E\{\overline{N}\}) \big\}.
 \eee
 Above, the notation $C=A\otimes B$ means $C_{ijk\ell}=A_{ij}B_{k \ell}$ for $i,j,k,\ell=1,2,3$, $\overline{N}$ is the complex conjugate of $N$, and $\E \{ N | a_0 \}$ is the expectation of $N$ conditioned on $a_0$.
Consider now $v$ the solution to \fref{eqv} and let $U(g)=(g,v)$. In \fref{eqv}, the random field $N^a$ is independent of $(W,N^\theta)$, while $W$ and $N^\theta$ are correlated. For the correlation functions defined above, these random fields verify: 
                 \bee
                 \E\{W(x) \otimes W (x')\}&=&\calC^W \delta(x-x')\\
                  \E\{W(x)N^\theta(x')\}&=&\calC^{W,N} \delta(x-x')\\ 
                  \E\{N^\theta (x) N^\theta(x')\}&=&\calC^{\theta} \delta(x-x')\\
                                   \E\{N^\theta (x) \overline{N}^\theta(x')\}&=&\calC^{\theta,*} \delta(x-x')\\
                                   \E\{N^a (x) N^a(x')\}&=&\calC^a \delta(x-x')\\
                                     \E\{N^a (x) \overline{N}^a(x')\}&=&\calC^{a,*} \delta(x-x').
                 \eee
                 
                 We are now in position to state our main result.

                 \begin{theorem} \label{th} We have
                   $$
                   \lim_{\eta \to 0} \E\{|U_\eta(g)|^2\}=\E\{|U(g)|^2\}, \qquad \lim_{\eta \to 0} \E\{(U_\eta(g))^2\}= \E\{(U(g))^2\}.
                   $$
                 \end{theorem}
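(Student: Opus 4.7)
The plan is to compute $\E\{|U_\eta(g)|^2\}$ and $\E\{U_\eta(g)^2\}$ via Chatterjee's covariance formula applied to the independent family $\{m_j\}_{j\in J_\eta(\omega)}$, expand the one-cell variations by small-volume asymptotics combined with the two-scale expansion \fref{2sca}, and pass to the limit as $\eta\to 0$ to recover the variance of $U(g)=(g,v)$.

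First, I would write $U_\eta(g)=F_\eta(m)$ where $F_\eta$ is a function of the countable family of independent variables $m=(m_j)_{j\in\Zm^3}$, and apply Chatterjee's formula (see Section~\ref{seccov}) to express $\E\{U_\eta(g)\overline{U_\eta(g)}\}$ (and the analogous non-conjugated quantity) as a sum over $j\in J_\eta(\omega)$ of terms of the form $\E\{\Delta_j F_\eta \,\overline{T_j F_\eta}\}$, where $\Delta_j F_\eta=F_\eta(m)-F_\eta(m^j)$ with $m^j$ the configuration obtained by replacing $m_j$ by an independent copy, and $T_jF_\eta$ is the associated partial-resampling operator. The use of Chatterjee's formula here (rather than Helffer--Sj\"ostrand as in \cite{gu2016scaling}) is dictated by the fact that the randomness sits in independent blocks rather than in a Gaussian field.

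Next, for a fixed $j\in J_\eta(\omega)$, I would compute $u_\eta-u_\eta^j$, the difference of the Helmholtz solutions corresponding to a single inclusion substitution in the cell of size $\eta$ around $\eta(j-z(\omega))$. Using the small-volume expansions of Section~\ref{secSV} in the spirit of \cite{ammari2007polarization,AmMoskVog02,CMV-IP,BP-AA}, this difference is, to leading order in $\eta$, an $\eta^3$ correction localized at the cell, whose amplitude is determined by the polarization-type integrals $M_{\ell,k}$ defined in \fref{defMF} (through the modified corrector $\phi^-$ that accounts for multiple scattering by all \emph{other} inclusions) and by the volume-type integral $N$ in \fref{defNF} (through $\Lambda$). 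Testing against $g$ via the Green's function of the homogenized operator, and replacing $u_\eta$ inside the integrand by its two-scale expansion $u_{\rm h}(x)\Lambda(x/\eta,\omega)+\eta\phi(x/\eta,\omega)\!\cdot\!\nabla u_{\rm h}(x)$, each single-cell contribution takes, modulo $o(\eta^3)$, the form
\[
\eta^3\Big(a_b\,\nabla u_{\rm h}\!\cdot\! M(m)\nabla G-k_0^2\,N(m)\,u_{\rm h}\,G\Big)\Big|_{x=\eta(j-z(\omega))},
\]
with $G$ the Green's function of the effective operator tested against $g$.

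Then I would plug these one-cell expansions back into Chatterjee's formula. Because $\Delta_j F_\eta$ involves resampling only $m_j$, the $(\theta_j,\rho_j)$- and $a_j$-fluctuations naturally decompose into the four correlation tensors $\calC^W,\calC^{W,N},\calC^\theta,\calC^a$ (and their conjugated counterparts $\calC^{\theta,*},\calC^{a,*}$), precisely by comparing conditional expectations with respect to $(\theta_0,\rho_0)$ versus with respect to $a_0$ as in the definitions preceding the theorem. The sum $\eta^3\sum_{j\in J_\eta(\omega)}\cdots$ is then a Riemann sum which, as $\eta\to 0$, converges to an integral over $\calB$ of products involving $u_{\rm h}$, $\nabla u_{\rm h}$, $G$, $\nabla G$ and the $\calC$-tensors. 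A direct computation of $\E\{|U(g)|^2\}$ from the SPDE \fref{eqv} yields the same integral, using the white-noise correlation prescriptions stated right before the theorem; this identification completes the proof.

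The main obstacle is controlling the error terms uniformly in $\eta$: the small-volume expansion for $u_\eta-u_\eta^j$ has to be valid in the presence of $O(\eta^{-3})$ other nearby inclusions, so one must absorb multiple-scattering effects into the modified corrector $\phi^-$ (justifying why the leading-order polarization tensor is $M$ rather than the naive single-ball polarization) and show that the resulting remainders sum to $o(1)$. This requires the Nash--Aronson estimates for the Green's function in perforated domains announced in the Appendix, together with a uniform control of $\nabla\phi^-$ away from the removed cell via \fref{corr3m}. A secondary but nontrivial point is justifying the convergence of the Riemann sum in the random environment, which relies on stationarity of $\Lambda$ and $\nabla\phi$ together with Birkhoff's ergodic theorem to replace cell-by-cell stochastic integrands by their $\calP$-expectations uniformly in $x\in\calB$; here the independence of the limit from $z$, announced in Section~\ref{mainresult}, follows from the fact that only the $\calP$-averages $\calC^W,\calC^{W,N},\calC^\theta,\calC^a,\calC^{\theta,*},\calC^{a,*}$ survive in the limit.
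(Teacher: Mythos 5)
Your overall skeleton matches the paper's: Chatterjee's covariance formula for independent blocks, a small-volume expansion of the one-cell variation producing exactly the polarization integral $M$ and the volume integral $N$ of \fref{defMF}--\fref{defNF} (your leading-order formula for a single-cell contribution agrees with Lemma \ref{LemM}), and a Riemann-sum passage to the limit identified with the variance of the SPDE \fref{eqv}. However, there is a genuine gap at the step where you claim the fluctuations ``naturally decompose'' into the tensors $\calC^W,\calC^{W,N},\calC^\theta,\calC^a$. Chatterjee's formula \fref{cov} is not a sum over $j$ of $\E\{\Delta_j F\,\overline{\Delta_j F}\}$: the second factor is evaluated at $X^{A_\alpha,*}$ and one must sum over \emph{all} subsets $A\subset J^\eta(z)\setminus\{j\}$ with the binomial weights $1/(C_{|A|}^n(n-|A|))$. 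You name a ``partial-resampling operator'' $T_j$ but never confront this subset sum, and it is precisely this structure that produces the conditional expectations $\langle M\rangle_{(\theta_0,\rho_0)}-\langle M\rangle$ and $\E\{N|a_0\}-\E\{N\}$ in the limit; a single-cell resampling alone cannot yield them (Efron--Stein-type single-site bounds give an inequality, not the identity needed here). The paper's Section \ref{secvar} spends most of its effort on exactly this: translating to the origin cell (Lemma \ref{stat} and Corollary \ref{corM}), showing that removing one far-away index from $A$ perturbs $F(A)$ by only $O(\eta^4)$ --- which hinges on the mean-zero property \fref{AV2} of the leading correction, since the naive bound $O(\eta^2)$ would be fatal against the $O(\eta^{-3})$ cells --- and then a combinatorial Ces\`aro argument identifying $\lim_q T_q=F(\Zm^3\setminus\{0\})$.

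Two secondary misattributions: the uniformity issue you flag is not primarily about validating the small-volume expansion amid $O(\eta^{-3})$ neighbors (that is absorbed once and for all into $\phi^{j,-}$ via the two-scale expansion \fref{2sGm}), but about decoupling the resampled far-away cells inside the second factor of the covariance formula, for which the representation formula \fref{repr} and the Green's function decay of Appendix \ref{secG} are used. And the Riemann sum at the end requires no ergodic theorem: after the covariance formula the coefficients $\calC^{j,\eta}$ are already deterministic expectations, and what must be shown is that they become independent of $j$ and $z$ as $\eta\to 0$, which again follows from the subset analysis rather than from Birkhoff's theorem.
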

                 Note that since the limit of $U_\eta(g)$ is expected to be (complex) Gaussian, it is enough to characterize the limits of $\E\{|U_\eta(g)|^2\}$ and $\E\{(U_\eta(g))^2\}$ to fully characterize the limit of $U_\eta(g)$. The limits above can be explicited as follows. Let $G_\h$ be the homogenized Green's function satisfying
       \begin{equation} \label{homoGF}
\left\{
  \begin{array}{l}
    \nabla_x \cdot (A_{\rm{eff}}(x) \nabla_x G_\h(x,x')) + k_0^2 \mu_{\rm{eff}}(x)G_\h(x,x')=\delta(x-x'), \qquad \textrm{in  } \Rm^3\\
    (\partial_r-ik_0)G_\h=O(\frac{1}{r^{2}}), \qquad \textrm{as} \quad r \to \infty.
  \end{array}
  \right.
\end{equation}
Then, $U(g)$ can be expressed as
              \bee
                 U(g)&=&\int_{\Rm^3} v(x) g(x) dx\\
                 &=&a_b\int_{\Rm^3}\int_{\calB} \nabla_{x'} G_{\rm{h}}(x,x') \cdot W(x') \nabla_{x'}u_{\rm{h}}(x') g(x) dx\\
                 &&-k_0^2 \int_{\Rm^3}\int_{\calB} G_{\rm{h}}(x,x') \cdot [N^\theta+N^a](x') u_{\rm{h}}(x') g(x) dx,
                 \eee   
     and

             \bee
             \lim_{\eta\to 0} \E \{|U_\eta(g)|^2\}&=&a_b^2 \sum_{i,j,k,l=1}^3 \calC^W_{ijkl}\int_\calB \partial_{x_i} G_{\textrm{h},g}\partial_{x_j} u_{\textrm{h}} \partial_{x_k} \overline{G}_{\textrm{h},g} \partial_{x_l} \overline{u}_{\textrm{h}} dx\\
             &&+k_0^4\left( \calC^{\theta,*}+ \calC^{a,*}\right)\int_\calB |G_{\textrm{h},g} u_{\textrm{h}}|^2  dx\\
             &&+  a_b k_0^2\sum_{i,j=1}^3  \overline{\calC}^{W,N}_{ij}\int_\calB \partial_{x_i} G_{\textrm{h},g}\partial_{x_j} u_{\textrm{h}} \overline{G}_{\textrm{h},g} \overline{u}_{\textrm{h}}dx\\
             &&+a_b k_0^2\sum_{i,j=1}^3  \calC^{W,N}_{ij}\int_\calB \partial_{x_i} \overline{G}_{\textrm{h},g}\partial_{x_j} \overline{u}_{\textrm{h}} {G}_{\textrm{h},g} {u}_{\textrm{h}}dx.
             \eee
             A similar relation holds for the limit of $\E \{(U_\eta(g))^2\}$.

             The rest of the paper is dedicated to the proof of Theorem \ref{th}. The starting point is an adaptation of a covariance formula due to Chatterjee: the function $u_\eta$ depends on the collection of random variables $(\theta_j,\rho_j,a_j)_{j \in \Zm^3}$, and actually on only a finite number of them associated with inclusions in $\calB$. Since these random variables are independent for different $j$, it is possible to use the results of \cite{chatterjee2008new} after small modifications. This is done in the next section. 
  
\section{Covariance formula} \label{seccov}

Let $n \geq 1$ and $X=(X^\theta,X^a)$ be a $n \times 3$ random matrix, where $X^\theta=(\theta,\rho)$ ($X^\theta$ should actually be denoted $X^{\theta,\rho}$ which we avoided for simplicity), and $X^\theta=(X^\theta_1, \cdots,X^\theta_n )^T \in S_0^n$ ($^T$ denotes transposition) contains $n$ independent realisations of $(\theta,\rho)$ identically distributed according to $\nu_0$. The random vector $X^a=(X^a_1, \cdots,X^a_n )^T \in \Cm_+^n$ contains $n$ independent realisations of $a$ identically distributed according to $\nu_1$, and $X^\theta$ is independent of $X^a$. Note that the variables $\theta$ and $\rho$ are dependent since $(\theta,\rho) \in S_0$, which is why we treat them together in $X^\theta$. 
We denote by $X'$ an independent copy of $X$. Let $[n]=\{1,\cdots,n\}$ and for a subset $A$ of $[n]$, let, for $\alpha=\theta,a$,
  $$
 X_i^{\alpha,A}=\left\{
  \begin{array}{l}
    (X_i^\alpha)' \qquad \textrm{if} \quad i \in A\\
    X_i^\alpha \qquad \textrm{if} \quad i \notin A,
  \end{array}
  \right.
$$
namely the components of $X^\alpha$ with indices in $A$ are replaced by an independent copy. 
Let moreover $X^{A_\theta}=X^{A_\theta,*}=(X^{\theta,A},X^a)$, $X^{A_a}=(X^\theta,X^{a,A})$, $X^{A_a,*}=((X^{\theta})',X^{a,A})$. Let $f:S^n \to \Cm$ and introduce, for $j \in [n]$,
\bee
\Delta^\theta_j f(X)&=&f(X^\theta,X^a)-f(X^{\theta,j},X^a), \\
\Delta^a_j f(X)&=&f(X^\theta,X^a)-f(X^\theta,X^{a,j}).
\eee
We have the following lemma, adapted from Lemma 2.3 in \cite{chatterjee2008new}:

\begin{lemma} \label{lemCov}Let $|A|$ be the cardinal of $A \subseteq [n]$, and denote by $C^n_p$ the binomial coefficient. Then, 

 \be \label{cov}
      {\rm{Cov}}(h(X),f(X)) =\frac{1}{2}\sum_{\alpha=\theta,a}\sum_{A \subsetneq [n]} \frac{1}{C_{|A|}^n (n-|A|)} \sum_{j \notin A} \E \{ \Delta^\alpha_j h(X) \Delta^\alpha_j f(X^{A_\alpha,*}) \}.
      \ee
 Note that the empty set is included in the sum above over $A$.
\end{lemma}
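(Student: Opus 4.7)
\emph{Plan.} I would prove Lemma \ref{lemCov} by reducing the two-family covariance formula to the single-family formula of Chatterjee \cite[Lemma~2.3]{chatterjee2008new} via the law of total covariance, which is a natural fit since $X^\theta$ and $X^a$ are independent:
\begin{equation*}
{\rm Cov}(h(X),f(X)) = \E\bigl\{{\rm Cov}\bigl(h(X),f(X)\,\big|\,X^a\bigr)\bigr\} + {\rm Cov}\bigl(\E\{h\mid X^a\},\E\{f\mid X^a\}\bigr).
\end{equation*}
The inner conditional covariance is a covariance over $X^\theta$ with $X^a$ frozen, and the outer covariance is between two functions of $X^a$ alone. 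Applying the single-family formula to each will produce the two terms $\alpha=\theta$ and $\alpha=a$ of \fref{cov}.

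\emph{Step 1: single-family formula.} For one vector $Y=(Y_1,\dots,Y_n)$ of independent coordinates with independent copy $Y'$, I would write, for any permutation $\sigma$ of $[n]$, the telescoping identity
\begin{equation*}
f(Y) - f(Y') = \sum_{k=0}^{n-1}\bigl[f(Y^{A_k}) - f(Y^{A_k\cup\{\sigma(k+1)\}})\bigr], \qquad A_k:=\{\sigma(1),\ldots,\sigma(k)\},
\end{equation*}
with $Y^A$ denoting the vector whose coordinates in $A$ have been replaced by those of $Y'$. Multiplying by $h(Y)$, taking expectations, and averaging over a uniform random $\sigma$, each pair $(A,j)$ with $j\notin A$ receives weight $|A|!(n-|A|-1)!/n!=1/(C^n_{|A|}(n-|A|))$, yielding
\begin{equation*}
{\rm Cov}(h,f) = \sum_{A\subsetneq[n]}\frac{1}{C^n_{|A|}(n-|A|)}\sum_{j\notin A}\E\bigl\{h(Y)\bigl[f(Y^A) - f(Y^{A\cup\{j\}})\bigr]\bigr\}.
\end{equation*}
To produce the $\tfrac12$ prefactor in \fref{cov}, I would symmetrize: the joint law of $(Y,Y')$ is invariant under the swap of coordinate $j$, and since $j\notin A$ this swap exchanges $Y^A\leftrightarrow Y^{A\cup\{j\}}$ while taking $h(Y)\mapsto h(Y^{\{j\}})$. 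Averaging the displayed expression with its image under this swap turns each term into $\tfrac12\,\E\{\Delta_j h(Y)\,\Delta_j f(Y^A)\}$, which is Chatterjee's single-family identity.

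\emph{Step 2: reassembly.} For the conditional term I would apply Step~1 pointwise in $X^a$ with $Y=X^\theta$ and take expectation in $X^a$; since $(X^\theta)'$ is independent of $X^a$, this produces exactly the $\alpha=\theta$ contribution with $X^{A_\theta,*}=(X^{\theta,A},X^a)$. For the outer covariance I would apply Step~1 to the two $X^a$-measurable functions $H(X^a):=\E\{h(X^\theta,X^a)\mid X^a\}$ and $F(X^a):=\E\{f(X^\theta,X^a)\mid X^a\}$, yielding sums of the form $\E\{\Delta^a_j H(X^a)\,\Delta^a_j F((X^a)^A)\}$. The key point is that writing $H$ and $F$ as integrals over $X^\theta$ and forming the product realizes the two factors as \emph{independent} integrations over $X^\theta$, which in the joint expectation correspond to two independent copies $X^\theta$ and $(X^\theta)'$. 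This is exactly the reason the argument of $f$ is the ``starred'' vector $X^{A_a,*}=((X^\theta)',X^{a,A})$ rather than $(X^\theta,X^{a,A})$.

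\emph{Main obstacle.} The combinatorial identity in Step~1 is a direct adaptation of Chatterjee's proof, so the real subtlety lies in Step~2: one has to keep careful track of which independent copy of $X^\theta$ is consumed by each conditional expectation in the $\alpha=a$ piece to land on $X^{A_a,*}$ and not on $X^{A_a}$. No integrability issue arises because $\nu_1$ has compact support, $S_0$ is bounded, and $n$ is finite, so all expectations above are trivially well defined.
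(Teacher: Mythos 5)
Your proposal is correct, and at bottom it is the same proof as the paper's: both reduce to Chatterjee's single-family telescoping identity (with the $|A|!(n-|A|-1)!/n!$ weights and the coordinate-swap symmetrization giving the $\tfrac12\,\E\{\Delta_j h\,\Delta_j f\}$ form), applied once to the $\theta$-family and once to the $a$-family. The only difference is in how the two families are separated. The paper splits $f(X)-f(X')$ directly as $[f(X^\theta,X^a)-f((X^\theta)',X^a)]+[f((X^\theta)',X^a)-f((X^\theta)',(X^a)')]$ inside $\E\{h(X)(f(X)-f(X'))\}$; multiplying out, these two pieces are precisely your $\E\{{\rm Cov}(h,f\mid X^a)\}$ and ${\rm Cov}(\E\{h\mid X^a\},\E\{f\mid X^a\})$, so your law-of-total-covariance decomposition is the same decomposition in conditional-expectation clothing. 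What each route buys: the paper's direct splitting makes the appearance of $(X^\theta)'$ in the $\alpha=a$ term automatic (it is already sitting in the second difference), whereas your route must recover it from the observation that the product $\E\{\Delta^a_j H(X^a)\,\Delta^a_j F(X^{a,A})\}$ involves two independent integrations over $X^\theta$ — you correctly flag and resolve exactly this point, which is the one place your argument needs care beyond the single-family lemma. Conversely, your framing makes the probabilistic meaning of the two contributions ($\theta$-fluctuations at fixed permittivities versus fluctuations of the $a$-conditional means) transparent, which the paper's ``the other term follows similarly'' leaves implicit.
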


\begin{proof}
  We have first
  $$
  {\rm{Cov}}(h(X),f(X))=\E\{h(X) f(X)\}-\E\{h(X)\}\E\{f(X)\}=\E\{h(X)(f(X)-f(X'))\},
  $$
  and we decompose then $f(X)-f(X')$ into two parts
      \bee
      f(X)-f(X')&=&f(X^\theta,X^a)-f((X^\theta)',X^a)\\
      &&+f((X^\theta)',X^a)-f((X^\theta)',(X^a)').
      \eee
      Each term is treated independently. 
      Proceeding as in \cite{chatterjee2008new}, we write
      \be \label{DF}
      f(X^\theta,X^a)-f((X^\theta)',X^a)=\sum_{A \subsetneq [n]} \frac{1}{C_{|A|}^n (n-|A|)} \sum_{j \notin A} \Delta^\theta_j f(X^{\theta,A},X^a),
      \ee
      and remark that
      \begin{align*}
        &\E \{h(X^\theta,X^a)(f(X^{\theta,A},X^a)-f(X^{\theta,A \cup j},X^a))\}\\
        &=\E \{h(X^{\theta,j},X^a)(f(X^{\theta,A\cup j},X^a)-f(X^{\theta,A},X^a))\}.
      \end{align*}
      As a consequence
       \begin{align*}
        &\E \{h(X^\theta,X^a)(f(X^{\theta,A},X^a)-f(X^{\theta,A \cup j},X^a))\}\\
        &=\frac{1}{2} \E \{\Delta^\theta_j h(X^{\theta},X^a) \Delta^\theta_j f(X^{\theta,A},X^a)\}.
      \end{align*}
Together with \fref{DF}, this gives the $\theta$ part of \fref{cov}. The other term follows similarly.
\end{proof}

\medskip

We will use the last lemma to evaluate $\E \{U_\eta(g)^2\}$ and $\E \{|U_\eta(g)|^2\}$. Note that there are other possible decompositions for the covariance in terms of $(\theta,\rho,a)$, and the one we used has the advantage of explicitly separating the contributions of $(\theta,\rho)$ and $a$. The set of integers $[n]$ in the lemma corresponds to an enumeration of the cell positions contained in $J_\eta(\omega)$, and a row of $X$ is $m_j(\omega)$. We recall that the $m_j(\omega)$ for different $j$ are independent and identically distributed. The set $J_\eta(\omega)$ is random because of the cells located at the boundary of $\calB$, and depending on $z(\omega)$, some boundary cells may or may not be part of $J_\eta(\omega)$. Cells sufficiently far from $\partial \calB$ are always in $J_\eta(\omega)$. Since $J_\eta(\omega)$ depends on $\omega$ only through $z$, we will denote in the sequel $J_\eta(\omega)=J_\eta(z)$.  

With $n:=|J^\eta(z)|$, $f(X)=\bar{h}(X)=U_\eta(g)$, and $u_\eta(g,\omega):=(g,u_\eta(\omega))$, we find, using Lemma \ref{lemCov}, after rearranging the sum, 
 \be \label{cov2}
    \E \{|U_\eta(g) |^2\}=\frac{1}{2 \eta^3}\sum_{j \in J^\eta(z)} \sum_{\alpha=\theta,a}\sum_{A \subset J^\eta(z)\backslash \{j\} } \frac{1}{C_{|A|}^n (n-|A|)} \E \{ \Delta^\alpha_j u_\eta(g,\omega) \Delta^\alpha_j \overline{u_\eta}(g,\omega^{A_\alpha,*}) \}.
    \ee
    Above, the notation $\omega^{A_\alpha,*}$ has the same meaning as that of $X^{A_\alpha,*}$, that is when $\alpha=\theta$, the $(\theta_j,\rho_j)$ with indices $j$ in $A$ are replaced by an independent copy. When $\alpha=a$, the definition is similar with in addition all $(\theta_j,\rho_j)_{j \in \Zm^3}$ replaced by an independent copy. The core of the analysis is then to obtain asymptotic expressions of $\Delta^\alpha_j u_\eta$, which involve two ingredients: (i) small volume expansions for $u_\eta(x,\omega)$ and for related Green's functions; they arise since only the inclusion $j$ is modified in $\Delta^\alpha_j u_\eta$, yielding a contribution of order of its volume, namely $O(\eta^3)$, and (ii) two-scale expansions for $u_\eta(x,\omega)$ and for other functions, needed to obtain the leading term in $\Delta^\alpha_j u_\eta$. The two-scale expansion \fref{2sca} holds in the interior of $\calB$ and not close to its boundary due to boundary effects. In the set $J^\eta(z)$, we will therefore consider only those indices $j$ that are associated with cells located at least at a distance $\delta>0$ from the boundary, $\delta$ independent of $\eta$. This set is denoted by $J^\eta_\delta$. 
The number of cells at a distance less than $\delta$ is of order $O(\delta \eta^{-3})$, and since $\Delta^\alpha_j u_\eta$ is of order $O(\eta^3)$ as claimed earlier, discarding cells at the boundary introduces an error of order $O(\delta)$ in \fref{cov2}. 
We have then
\bea \nonumber
\E\{|U_\eta(g) |^2\}&=&\frac{1}{2 \eta^3 }\sum_{j \in J^\eta_\delta} \sum_{\alpha=\theta,a}\sum_{A \subset J^\eta(z) \backslash \{j\}} \frac{1}{C_{|A|}^n (n-|A|)} \E \{ \Delta^\alpha_j u_\eta(g,\omega) \Delta^\alpha_j \overline{u_\eta}(g,\omega^{A_\alpha,*}) \}\\
&&+O(\delta), \label{cov3}
    \eea
with a similar expression for $\E \{U_\eta(g)^2\}$. The boundary effects are then of order $O(\delta)$ and are negligible.
      
Starting from \fref{cov3}, the next step is to investigate $u_\eta(g,\omega)$ and derive asymptotic expressions. 

      \section{Small-volume expansions} \label{secSV}

      \subsection{Preliminaries} We denote by $G_\eta(x,x',\omega)$ the Green's function satisfying
      $$
      \nabla_x \cdot a_\eta  \nabla_x G_\eta + k_0^2 G_\eta=\delta(x-x'), \qquad x,x' \in \Rm^3,
$$
      equipped with radiation conditions. Starting from $a_\eta$, we remove the inclusion located at cell $j$ and denote the resulting coefficient by $a_\eta^{j,-}$. The associated Green's function is $G_\eta^{j,-}$. 
      We then write
       \bee
       \nabla_x \cdot a_\eta(\omega)  \nabla_x u_\eta(\omega)&=&\nabla_x \cdot (a_\eta^{j-}+(\eta^2 a_j(\omega)-a_b)\un_{D_\eta^j(\omega)}(x))  \nabla_x u_\eta(\omega)\\
       \nabla_x \cdot a_\eta(\omega^{j_\alpha})  \nabla_x u_\eta(\omega^{j_\alpha})&=&\nabla_x \cdot (a_\eta^{j-}+(\eta^2 a_j(\omega^{j_\alpha})-a_b)\un_{D_\eta^j(\omega^{j_\alpha})}(x))  \nabla_x u_\eta(\omega^{j_\alpha}),
       \eee
       where we recall that in $\omega^{j_\alpha}$, $(\theta_j,\rho_j)$ is replaced by an independent copy when $\alpha=\theta$, and that $a_j$ is replaced by an independent copy when $\alpha=a$. Injecting the relations above in the respective Helmholtz equations, taking the difference, multiplying by $G_\eta^{j,-}(x,x',\omega)$, and integrating by parts give
        \bea \label{diffu}
        u_\eta(x,\omega)-u_\eta(x,\omega^{j_\alpha}) &=&  \int_{D_\eta^j(\omega)} (\eta^2 a_j(\omega)-a_b) \nabla_{x'} G^{j-}_\eta(x,x',\omega) \cdot \nabla u_\eta(x',\omega) dx'\\\nonumber
        &&-\int_{D_\eta^j(\omega^{j_\alpha})} (\eta^2 a_j(\omega^{j_\alpha})-a_b) \nabla_{x'} G^{j-}_\eta(x,x',\omega) \cdot \nabla u_\eta(x',\omega^{j_\alpha}) dx'. 
        \eea
        To move forward, we will need the results of the next paragraph.
        
          \paragraph{Two-scale expansions for $G_\eta^{j,-}$.} We recall $G_\h$ is the homogenized Green's function solution to \fref{homoGF}. For $j \in \Zm^3$,  let $\Lambda^{j,-}$ be equal to $\Lambda$  except that $\Lambda^{j,-}=1$ on $D^j(\omega)$. We show in Appendix \ref{2scale} that the two-scale expansion of $G_\eta^{j,-}$ is
      \be \label{2sGm}
      G_\eta^{j,-}(x,x',\omega)= \Lambda^{j,-}(x/\eta,\omega) G_{\h}(x,x')+\eta \phi^{j,-}(x/\eta,\omega) \cdot \nabla_x G_{\h}(x,x')+O(\eta^2),
      \ee
      where $\phi^{j,-}$ was introduced in Section \ref{setting} and solves \fref{corr1m}-\fref{corr2m}-\fref{corr3m}. Going back to \fref{diffu}, and with the notation $G^{j,-}_{\eta,g}(x',\omega)=\int_{\Rm^3} G^{j-}_\eta(x,x',\omega) g(x) dx$, we define
         \be \label{defG}
      \calG_j(\omega)=\int_{D_\eta^j(\omega)} (\eta^2 a_j(\omega)-a_b) \nabla G^{j,-}_{\eta,g}(x',\omega) \cdot \nabla u_\eta(x',\omega) dx',
      \ee
      so that
      $$
      \Delta^\alpha_j u_{\eta}(g,\omega)=\calG_j(\omega)-\calG_j(\omega^{j_\alpha}).
$$
      Note that in the latter expression we have used the fact that $G^{j,-}_{\eta,g}(x',\omega)=G^{j,-}_{\eta,g}(x',\omega^{j_\alpha})$ since $G_\eta^{j,-}$ is independent of the $j$ inclusion.
      
    \subsection{Expansions of $\Delta^\alpha_j u_{\eta}$}
      We now exploit the two-scale expansions of $u_\eta$ and $G_\eta^{j,-}$ to obtain the leading terms in $\Delta^\alpha_j u_{\eta}$. 
      For $I$ the $3\times 3$ identity matrix, we will use the notations
      \bea \label{defAB}
      Q(y,\omega)&=&I+(\nabla \phi_1(y,\omega), \cdots,\nabla  \phi_3(y,\omega))\\
      Q^{j,-}(y,\omega)&=&I+(\nabla \phi^{j,-}_1(y,\omega), \cdots,\nabla  \phi^{j-}_3(y,\omega))\label{2sGum}
    \eea
    as well as $G_{\rm{h},g}(x')=\int_{\Rm^3} G_{\rm{h}}(x',x)g(x) dx'$. The two-scale expansions of $u_\eta$ and $G^{j-}_{\eta}$ yield
    \bea
 \label{2sGu}   \nabla u_\eta (x,\omega)&=&\eta^{-1} (\nabla \Lambda)(x/\eta,\omega) u_\h(x)+Q(x/\eta,\omega) \nabla u_\h(x)+O(\eta)\\
  \label{2sGG}  \nabla G^{j,-}_{\eta,g} (x,\omega)&=&\eta^{-1} (\nabla \Lambda^{j,-})(x/\eta,\omega) G_{\h,g}(x) +Q^{j,-}(x/\eta,\omega)\nabla G_{\rm{h},g}(x)+O(\eta).
  \eea
  Note that the function $\phi$ has not been defined on $D^\ell(\eta)$, $\ell \in \Zm^3$, which is not a problem since it will not be needed. The terms $Q(x/\eta,\omega)$ and $Q^{j,-}(x/\eta,\omega)$ above have then to be only considered outside of $D^\ell(\eta)$ for all $\ell \in \Zm^3$ for $Q$ and outside of $D^\ell(\eta)$ for all $\ell \neq j$ for $Q^{j,-}$. 
  
    When $x \in D^j_\eta(\omega)$, the term of order $\eta^{-1}$ in \fref{2sGG} is equal to zero, but not the one in \fref{2sGu}. This seems to imply, naively, that since the volume of $D_\eta^j(\omega)$ is of order $\eta^3$, the term $\calG_j(\omega)$ is of order $\eta^2$. We will actually see after an integration by parts that the term of order $\eta^{-1}$ in $\nabla u_\eta (x,\omega)$ yields a vanishing contribution to $\calG_j(\omega)$, and as a consequence $\calG_j(\omega)$ is of order  $\eta^3$ and not  $\eta^{2}$.

      Instead of directly injecting \fref{2sGu} and \fref{2sGG} into \fref{defG} and handling a posteriori $\nabla \Lambda$, we first transform $\calG_j$ in order to identify the leading terms: we find, after an integration by parts,
      \bee
      \calG_j(\omega)&=&\int_{\partial D_\eta^j(\omega)} (\eta^2 a_j(\omega)-a_b) n^j \cdot \nabla G^{j,-}_{\eta,g}(x',\omega)  u_\eta(x',\omega) dS(x')\\
      &&-\int_{ D_\eta^j(\omega)} (\eta^2 a_j(\omega)-a_b) \Delta G^{j,-}_{\eta,g}(x',\omega) u_\eta(x',\omega) dx'\\
      &=&\int_{\partial D_\eta^j(\omega)} (\eta^2 a_j(\omega)-a_b) n^j \cdot \nabla G^{j,-}_{\eta,g}(x',\omega)  u_\eta(x',\omega) dS(x')\\
 &&+a_b^{-1} k_0^2\int_{ D_\eta^j(\omega)} (\eta^2 a_j(\omega)-a_b)  G^{j,-}_{\eta,g}(x',\omega)  u_\eta(x',\omega) dx'\\
      &:=& \calG_j^0(\omega)+\calG_j^1(\omega).
      \eee
      In the second equality above, we used the fact that $G^{j,-}_{\eta,g}$ verifies the Helmholtz equation with no inclusion at cell $j$. We recall that $n^j$ is the outward normal to $\partial D_\eta^j(\omega)$ and that $dS$ is the surface measure on $\partial D_\eta^j(\omega)$.
      
      We have the following lemma that provides us with an asymptotic expansion of $\calG_j(\omega)$:
      \begin{lemma} \label{LemM} Let $j \in J_\eta^\delta$. With
        $$
        M^j_{\ell,k}(\omega)=\int_{\partial D^j(\omega)} n^j \cdot (\nabla y_\ell + \nabla \phi^{j,-}_\ell(y,\omega)) \left(y_k+\phi_k (y,\omega) \right) dS(y)
        $$
        for $\ell,k=1,2,3$ and
        $$
        N^j(\omega)=\int_{D^j(\omega)} (1+\Lambda(y,\omega))dy,
        $$
        we have,
        $$
        \calG_j(\omega)=-a_b \eta^3 \nabla G_{\rm{h},g}(j \eta) \cdot M^j(\omega) \nabla u_{\rm{h}}(j\eta )+k_0^2 \eta^3 N^j(\omega) G_{\rm{h},g}(j \eta) u_{\rm{h}}(j \eta) +O(\eta^4).
        $$
        \end{lemma}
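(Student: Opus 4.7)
The plan is to exploit the decomposition $\calG_j = \calG_j^0 + \calG_j^1$ obtained right before the lemma, substitute the two-scale expansions \fref{2sGu}--\fref{2sGG} (together with the scalar companions for $u_\eta$ and $G^{j,-}_{\eta,g}$), change variables to the microscopic coordinate $y = x'/\eta$ on the rescaled inclusion so that $D_\eta^j = \eta D^j$ and $\partial D_\eta^j = \eta \partial D^j$, and Taylor expand the macroscopic fields $u_\h$ and $G_{\h,g}$ around $\eta j$. Since $(\eta^2 a_j - a_b) = -a_b + O(\eta^2)$ and $|D_\eta^j| = \eta^3|D^j|$, $|\partial D_\eta^j| = \eta^2|\partial D^j|$, tracking terms up to $O(\eta^3)$ suffices. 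I shall use the shorthand $\psi^{j,-}_k(y) = y_k + \phi^{j,-}_k(y)$ and write $H = \nabla\nabla G_{\h,g}(\eta j)$ for the Hessian.

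For $\calG_j^1 = a_b^{-1} k_0^2 \int_{D_\eta^j}(\eta^2 a_j - a_b) G^{j,-}_{\eta,g} u_\eta dx'$: on $D_\eta^j$ we have $\Lambda^{j,-} \equiv 1$ so $G^{j,-}_{\eta,g}(x') = G_{\h,g}(x') + O(\eta)$, while $u_\eta(x') = u_\h(x')\Lambda(x'/\eta) + O(\eta)$. Freezing $G_{\h,g}$ and $u_\h$ at $\eta j$ yields a leading contribution proportional to $k_0^2 \eta^3 G_{\h,g}(\eta j) u_\h(\eta j) \int_{D^j} \Lambda(y) dy$, accompanied by higher-order corrections that will participate in the construction of the $(1+\Lambda)$ structure of $N^j$.

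For $\calG_j^0 = (\eta^2 a_j - a_b)\int_{\partial D_\eta^j} n^j \cdot \nabla G^{j,-}_{\eta,g}\, u_\eta\, dS$: near $\partial D^j$ both $\Lambda$ and $\Lambda^{j,-}$ equal one (by the boundary condition for $\Lambda$, and because the background exterior to any single inclusion is free of other inclusions on the microscopic scale), so \fref{2sGG} reduces to $\nabla G^{j,-}_{\eta,g}(\eta y) = Q^{j,-}(y)\nabla G_{\h,g}(\eta y) + O(\eta)$ on $\partial D^j$, and the boundary trace of $u_\eta$ is $u_\h(\eta j) + \eta(y - j + \phi(y))\cdot \nabla u_\h(\eta j) + O(\eta^2)$. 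The formally leading $O(\eta^2)$ contribution vanishes, because $\int_{\partial D^j} n^j\cdot \nabla \psi^{j,-}_k\, dS = 0$: indeed $\phi^{j,-}_k$ is harmonic on $D^j \subset \calD_j^c$ and $\int_{\partial D^j} n^j\, dS = 0$, so the divergence theorem on $D^j$ applies. At next order, two types of contributions arise. The cross-term between the corrector $\phi$ in the trace of $u_\eta$ and $\nabla G_{\h,g}$ frozen at $\eta j$ directly reconstructs the polarization term $-a_b \eta^3 (\nabla G_{\h,g}(\eta j))^T M^j(\omega) \nabla u_\h(\eta j)$, after observing that $(y-j+\phi)_k$ and $(y_k + \phi_k)$ integrate identically against $n^j\cdot \nabla\psi^{j,-}_\ell$ (using once more $\int n^j\cdot \nabla\psi^{j,-}_\ell\, dS = 0$). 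The Taylor correction $\eta(y-j)\cdot H$ of $\nabla G_{\h,g}(\eta y)$ produces a Hessian-valued contribution that, via the divergence-theorem identity
\begin{equation*}
\int_{\partial D^j} n^j \cdot \nabla \psi^{j,-}_k\, (y-j)_m\, dS = \int_{D^j}(\delta_{mk} + \partial_m \phi^{j,-}_k)\, dy,
\end{equation*}
and the homogenized equation $\sum_{mk}(A_{\rm{eff}})_{mk} H_{mk} = -k_0^2 \mu_{\rm{eff}} G_{\h,g}(\eta j)$ inside $\calB$, combines with the leading piece of $\calG_j^1$ to yield $+k_0^2 \eta^3 N^j(\omega) G_{\h,g}(\eta j) u_\h(\eta j)$.

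The hard part is the final combination step: the Hessian boundary piece is a quadratic form in $H$ contracted against the tensor $\int_{D^j} Q^{j,-}\, dy$, whereas the $\calG_j^1$ piece is scalar-valued and depends on $\int_{D^j}\Lambda$. Reducing their sum to the clean scalar $\int_{D^j}(1+\Lambda)\, dy$ requires simultaneously invoking the effective PDE for $G_{\h,g}$, the interior Helmholtz equation $a_j \Delta_y \Lambda + k_0^2 \Lambda = 0$ with $\Lambda = 1$ on $\partial D^j$, and the harmonicity of $\phi^{j,-}$ on $D^j$; the corresponding Green-type identity on $D^j$ mediating this cancellation is the technical core of the proof, while all other steps reduce to careful bookkeeping of Taylor and two-scale expansions.
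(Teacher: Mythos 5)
Your decomposition $\calG_j=\calG_j^0+\calG_j^1$, the treatment of $\calG_j^1$ (giving the $\int_{D^j}\Lambda$ part of $N^j$), and the reconstruction of the $\phi_k$ part of $M^j$ from the cross term on the surface all match the paper. But your handling of the piece of $\calG_j^0$ carrying the trace $u_{\rm h}(\eta y)$ diverges from the paper's route, and this is where a genuine gap sits. The paper does \emph{not} expand $n^j\cdot\nabla G^{j,-}_{\eta,g}$ on the surface at this stage: it first converts that surface integral back into a volume integral over $D_\eta^j$ using the divergence theorem and the \emph{exact} identity $\Delta G^{j,-}_{\eta,g}=-a_b^{-1}k_0^2\,G^{j,-}_{\eta,g}$ on $D_\eta^j$, and only then substitutes the two-scale expansions. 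Because $|D_\eta^j|=O(\eta^3)$ while $|\partial D_\eta^j|=O(\eta^2)$, every $O(\eta)$ remainder in the expansions then contributes $O(\eta^4)$, and the term $+k_0^2\eta^3|D^j|\,G_{{\rm h},g}u_{\rm h}$ (the ``$1$'' in $N^j$) drops out immediately from the zeroth-order Helmholtz identity. In your surface-based version, the $O(\eta)$ remainder of $\nabla G^{j,-}_{\eta,g}$ — which contains the second-order two-scale corrector that you never introduce — multiplies an $O(1)$ trace over a surface of measure $O(\eta^2)$ and therefore contributes at exactly the $O(\eta^3)$ order you are computing; your flux cancellation $\int_{\partial D^j}n^j\cdot\nabla\psi^{j,-}_k\,dS=0$ kills only the formally leading $O(\eta^2)$ term, not this one.

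The second, related problem is the mechanism you propose for recovering the ``$1$'' in $N^j$. Your Hessian boundary term reduces to $-a_b\eta^3 u_{\rm h}(\eta j)\sum_{m,k}H_{mk}\int_{D^j}Q^{j,-}_{mk}(y)\,dy$, i.e.\ the Hessian contracted against the \emph{realization-dependent} matrix $\int_{D^j}Q^{j,-}\,dy$ supported on the inclusion. The homogenized equation you invoke, $\sum_{m,k}(A_{\rm eff})_{mk}H_{mk}=-k_0^2\mu_{\rm eff}G_{{\rm h},g}$, contracts $H$ against the deterministic $A_{\rm eff}$, which is an ensemble average over the \emph{exterior} of the inclusions; these two matrices are unrelated realization by realization, so the effective PDE cannot convert your Hessian term into $+k_0^2\eta^3|D^j|G_{{\rm h},g}u_{\rm h}$. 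The missing contribution is exactly what the uncontrolled second-order corrector term would supply. You flag this ``final combination step'' as the technical core without carrying it out; as proposed it does not close, whereas the paper's volume-integral detour makes the whole issue disappear. The fix is to adopt the paper's step: integrate $\calG_j^{00}$ by parts over $D_\eta^j$ and use the Helmholtz equation satisfied by $G^{j,-}_{\eta,g}$ before expanding.
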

      \begin{proof}
      We start by plugging the two-scale expansion $u_\eta(x,\omega)=u_{\rm{h}}(x) \Lambda(x/\eta,\omega)+\eta \phi(x/\eta,\omega)\cdot \nabla u_{\rm{h}}(x)+O(\eta^2)$ into $\calG_j^0$, and denote the respective terms by $\calG_j^{00}$ and $\calG_j^{01}$. Since the surface of $D_\eta^j(\omega)$ is of order $\eta^2$, it may appear as if $\calG_j^0$ is of order $\eta^2$. We will see after an integration by parts that $\calG_j^0$ is actually of order $\eta^3$. We find indeed, using that $\Lambda=1$ on $\partial D_\eta^j$, 
        \begin{align} \nonumber
    \calG_j^{00}(\omega)&:=\int_{\partial D_\eta^j(\omega)} (\eta^2 a_j(\omega)-a_b) n^j \cdot \nabla G^{j,-}_{\eta,g}(x',\omega)  u_{\rm{h}}(x') dS(x')\\ 
    &\nonumber = \int_{D_\eta^j(\omega)} (\eta^2 a_j(\omega)-a_b) \nabla G^{j,-}_{\eta,g}(x',\omega)\cdot \nabla  u_{\rm{h}}(x') dx'\\
                     & \label{G00} \qquad -a_b^{-1}k_0^2\int_{ D_\eta^j(\omega)} (\eta^2 a_j(\omega)-a_b) G^{j,-}_{\eta,g}(x')  u_{\rm{h}}(x') dx',
    \end{align}
    where we used once more the fact that $G^{j,-}_{\eta,g}$ verifies the Helmholtz equation with no inclusion at cell $j$. Injecting \fref{2sGG} in the first term in \fref{G00}, and writing $\nabla u_{\rm{h}}(x')=\nabla u_{\rm{h}}(j \eta)+O(\eta)$ as well as $\nabla G_{\rm{h},g}(x')=\nabla G_{\rm{h},g}(j \eta)+O(\eta)$  for $x' \in D^j_\eta(\omega)$, we have
        \begin{align} \nonumber
 &\int_{D_\eta^j(\omega)} (\eta^2 a_j(\omega)-a_b) \nabla G^{j,-}_{\eta,g}(x',\omega)\cdot \nabla  u_{\rm{h}}(x') dx'\\ 
 &=  -a_b     \left[\left(\int_{D_\eta^j(\omega)} Q^{j,-}(x'/\eta,\omega) dx'\right) \nabla  G_{\rm{h},g}(j \eta ) \right]\cdot \nabla  u_{\rm{h}}(j \eta) +O(\eta^4).\label{firstG0}
        \end{align}
        The term in parentheses above can be recast as 
        \bee \int_{D_\eta^j(\omega)} \left(Q^{j,-}(x'/\eta,\omega)\right)_{k,\ell} dx'&=&\int_{D_\eta^j(\omega)} (\delta_{k \ell} + \partial_{y_k} \phi^{j,-}_\ell (x'/\eta,\omega)) dx'\\
        &=&\eta^3 \int_{D^j(\omega)} (\delta_{k \ell} + \partial_{y_k} \phi^{j,-}_\ell (y,\omega)) dy\\
                       &=&\eta^3 \int_{D^j(\omega)} \nabla y_k \cdot (\nabla y_\ell+ \nabla  \phi^{j,-}_\ell(y,\omega)) dy\\
      &=&\eta^3 \int_{\partial D^j(\omega)} y_k\, n^j \cdot (\nabla y_\ell+ \nabla  \phi^{j,-}_\ell(y,\omega)) dS(y).
        \eee
    In the last equality, we used that $\Delta  \phi^{j,-}_\ell=0$ in $D^j(\omega)$. For the second term in \fref{G00}, we find, using the two-scale expansion of $G^{j,-}$ given in \fref{2sGm}:
      \begin{align} \nonumber
  -&a_b^{-1}k_0^2\int_{ D_\eta^j(\omega)} (\eta^2 a_j(\omega)-a_b) G^{j,-}_{\eta,g}(x')  u_{\rm{h}}(x') dx'\\
       &= k_0^2 |D_\eta^j(\omega)| G_{\rm{h},g}(j \eta) u_{\rm{h}}(j \eta) +O(\eta^4). \label{secondG0}
      \end{align}
      Collecting \fref{firstG0} and \fref{secondG0} then gives the leading terms in $\calG^{00}_j$. We now turn to $\calG_j^{01}$, and find
         \begin{align*} \nonumber
    \calG_j^{01}(\omega)&:=\eta \int_{\partial D_\eta^j(\omega)} (\eta^2 a_j(\omega)-a_b) \big[ n^j \cdot \nabla G^{j,-}_{\eta,g}(x',\omega) \big] \big[ \phi(x/\eta',\omega)\cdot \nabla u_{\rm{h}}(x') \big] dS(x')\\ \nonumber
                        &\nonumber = -\eta a_b  \left[\int_{\partial D_\eta^j(\omega)} \big[n^j \cdot \left(Q^{j,-}(x'/\eta,\omega) \nabla  G_{\rm{h},g}(j \eta) \right)  \big] \phi(x/\eta',\omega)  dS(x') \right]\cdot \nabla u_{\rm{h}}(j\eta )+O(\eta^4)\\
                        & \nonumber = -\eta a_b \sum_{i,\ell, k=1}^3 \partial_{x_\ell} G_{\rm{h},g}(j \eta) \left[\int_{\partial D_\eta^j(\omega)} n^j_i\, Q_{i\ell}^{j,-}(x'/\eta,\omega)\, \phi_k(x'/\eta,\omega)\,  dS(x') \right]\partial_{x_k} u_{\rm{h}}(j\eta )+O(\eta^4)\\
                        &\nonumber  = -\eta^3 a_b \sum_{\ell, k=1}^3 \partial_{x_\ell} G_{\rm{h},g}(j \eta) \left[\int_{\partial D^j(\omega)} n^j \cdot (\nabla y_\ell + \nabla \phi^{j,-}_\ell)(y,\omega) \phi_k (y,\omega)  dS(y) \right] \partial_{x_k} u_{\rm{h}}(j\eta )\\
           &\qquad +O(\eta^4).
         \end{align*}
         Gathering this and the expansion of $\calG_j^{00}$ finally gives
       $$
    \calG_j^0(\omega)=-a_b \eta^3 \nabla G_{\rm{h},g}(j \eta) \cdot M^j(\omega) \nabla u_{\rm{h}}(j\eta )+k_0^2 \eta^3 |D^j(\omega)| G_{\rm{h},g}(j \eta) u_{\rm{h}}(j \eta) +O(\eta^4).
    $$
To conclude, it is enough to plug the two-scale expansions of $ G^{j,-}_{\eta,g}$ and $u_\eta$ into $\calG_j^{1}$ to obtain
    $$\calG_j^1(\omega)= k_0^2 \eta^3 \int_{D^j(\omega)} \Lambda(y,\omega) dy \; G_{\rm{h},g}(j \eta) u_{\rm{h}}(j \eta)+O(\eta^4).
    $$
This ends the proof
\end{proof}

Owing to the preceding lemma, we are now in position to state expansions of $\Delta_j^\theta u_{\eta}(g,\omega)$ and $\Delta_j^a u_{\eta}(g,\omega)$:

    \begin{corollary} Let $j \in J_\eta^\delta$. Then, 
      \bea
 \label{exp1}      \Delta_j^\theta u_{\eta}(g,\omega)&=&-\eta^3 a_b  \nabla  G_{\rm{h},g}(j \eta )\cdot \big(  M^j(\omega)-M^j(\omega^{j_\theta})\big) \nabla  u_{\rm{h}}(j \eta )\\ \nonumber
       &&+ \eta^3 k_0^2  G_{\rm{h},g}( j\eta ) \big(N^j(\omega)-N^j(\omega^{j_\theta})\big) u_{\rm{h}}(j \eta )+O(\eta^{4})\\\label{exp2}
         \Delta_j^a u_{\eta}(g,\omega)
      &=&  \eta^3 k_0^2 G_{\rm{h},g}(j \eta )  \big(N^j(\omega)-N^j(\omega^{j_a})\big) u_{\rm{h}}(j \eta )+O(\eta^{4}).
      \eea
      \end{corollary}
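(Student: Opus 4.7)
The plan is to obtain both expansions as direct consequences of Lemma \ref{LemM} together with the identity $\Delta_j^\alpha u_{\eta}(g,\omega) = \calG_j(\omega) - \calG_j(\omega^{j_\alpha})$ that was established just before the statement. The heart of the matter is tracking precisely which objects in the leading-order expression
$$
\calG_j(\omega)= -a_b \eta^3 \nabla G_{\rm{h},g}(j\eta)\cdot M^j(\omega) \nabla u_{\rm{h}}(j\eta) + k_0^2 \eta^3 N^j(\omega) G_{\rm{h},g}(j\eta) u_{\rm{h}}(j\eta) +O(\eta^4)
$$
retain or lose their dependence on $\omega$ upon resampling the $j$-th cell in the $\theta$ or $a$ variable.

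First I would note that the homogenized objects $G_{\rm{h},g}$ and $u_{\rm{h}}$ are deterministic, and that the evaluation points $j\eta$ do not involve $\omega$; hence they factor out identically in $\calG_j(\omega)$ and $\calG_j(\omega^{j_\alpha})$. Also, the two $O(\eta^4)$ remainders combine into a single $O(\eta^4)$ term, since the bounds in the proof of Lemma \ref{LemM} depend only on the geometry of the unit cell $D^j$, which remains admissible under either resampling (the support of $\nu_1$ being compact and the constraint defining $S_0$ being preserved).

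For $\alpha=\theta$, the resampling affects $(\theta_j,\rho_j)$, which in turn affects $D^j(\omega)$, the restriction of $\phi$ to the neighborhood of the $j$-th inclusion, and the corrector $\phi^{j,-}$ on the complement of $\calD_j(\omega)$ near the $j$-th cell. Thus both $M^j$ and $N^j$ change, and subtracting $\calG_j(\omega^{j_\theta})$ from $\calG_j(\omega)$ yields the two-term expression \fref{exp1}.

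For $\alpha=a$, only the scalar $a_j$ is replaced by an independent copy while $D^j(\omega)$ is unchanged. The key observation -- which I expect to be the only nontrivial point -- is that $M^j(\omega)$ is independent of $(a_k)_{k\in\Zm^3}$: by construction \fref{corr1m}-\fref{corr2m}-\fref{corr3m}, the corrector $\phi^{j,-}$ solves a Laplace problem on $\calD_j(\omega)^c$ with Neumann data depending only on the geometry, hence on $(\theta_k,\rho_k)_{k\in\Zm^3}$; similarly $\phi$ on $\partial D^j(\omega)$ depends only on $(\theta_k,\rho_k)_{k\in\Zm^3}$. Consequently $M^j(\omega)=M^j(\omega^{j_a})$ and the first term vanishes. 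The function $\Lambda$ inside $N^j$ does depend on $a_j$ through \fref{Lamb}, so the second term survives and produces \fref{exp2}. The main obstacle is therefore simply this bookkeeping of $a$-independence of $M^j$; once that is spelled out, both formulae follow by subtraction.
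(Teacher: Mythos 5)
Your proposal is correct and follows exactly the paper's route: subtract $\calG_j(\omega^{j_\alpha})$ from $\calG_j(\omega)$ using Lemma \ref{LemM}, and observe that $M^j(\omega)=M^j(\omega^{j_a})$ because $\phi$ and $\phi^{j,-}$ are determined by the geometry $(\theta_k,\rho_k)_{k\in\Zm^3}$ alone and hence do not depend on $a$. The paper states this in one line; your additional bookkeeping of which objects change under each resampling is a faithful elaboration of that same argument.
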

The proof follows readily from Lemma \ref{LemM} and the observation that $M^j(\omega)=M^j(\omega^{j_a})$ since $\phi$ and $\phi^{j,-}$ do not depend on $a$.

 The next step is to evaluate $\E \{ \Delta^\alpha_j u_\eta(g,\omega) \Delta^\alpha_j \overline{u_\eta}(g,\omega^{A_\alpha,*}) \}$ for $\alpha=\theta,a$ using the results of this section. This involves estimating various terms of the form $\E \{ M^j_{i,\ell}(\omega) M^j_{p,q}(\omega^{A_\alpha,*})\}$ for $A \subset J^\eta(z) \backslash \{j\}$, which requires an explicit use of the probabilistic framework of Section \ref{setting}.

    \section{The asymptotic second order moments} \label{secvar}

    
Before investigating the variance itself, we need to establish some statistical properties of the modified corrector $\phi^{j,-}$. We already know that $\nabla \phi_i$ is stationary, which we recall is equivalent to $\nabla \phi_i(x+y,\omega)= \nabla \phi_i(y,T_x \omega)$ for all $x \in \Rm^3$. As already mentioned, this is not true for $\nabla \phi^-$ since there is no inclusion in the cell $j$ and stationarity cannot be expected. Yet, we will see that there is a comparable relation for $\phi^{j,-}$ that is enough for our analysis.

To see this, given $\omega \in \widetilde{\Omega}$, let $(\omega)_0=((m_{k}(\omega))_{k \in \Zm^3_\star},z(\omega))$, namely we remove the $0$ component of $m_{k}(\omega)$ in $\omega$, and introduce
\bee
      \calD_0((\omega)_0) &=&\bigcup_{k \in \Zm^3,\, k \neq 0}B(\theta_{k}(\omega)+k-z(\omega),\rho_{k}(\omega)).
      \eee
We have then the following lemma:

\begin{lemma} \label{stat}For $i=1,2,3$ and $j \in \Zm^3$, there exists $\phi_i^-(y,(\omega)_0)$, unique up to an additive constant, such that $\nabla \phi_i^{j,-}(x,\omega)=\nabla \phi_i^-(x-j,(T_j \omega)_0)$, and such that $\Delta \phi^-_i(y,(\omega)_0)=0$ on $\calD_0((\omega)_0)^c $, with the boundary conditions
      $$
      n \cdot \nabla \phi^-_i=-n \cdot e_i \qquad \textrm{on} \qquad \partial \calD_0((\omega)_0),
      $$
      and  the condition at infinity $\lim_{|y| \to \infty} \un_{\calD_0((\omega)_0)^c}(y) (\nabla \phi_i^{-}(y,\omega)-\nabla \phi_i(y,\omega))=0$.
  \end{lemma}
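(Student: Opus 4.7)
The plan is to construct $\phi_i^-$ as the unique modified corrector corresponding to the distinguished cell $j=0$, and then exploit the discrete stationarity of the random medium encoded by the measure-preserving transformations $T_j$ to deduce the stated relation between $\phi_i^{j,-}$ and $\phi_i^-$ for arbitrary $j$.

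First, I would construct $\phi_i^-(y,(\omega)_0)$ by applying the same perturbation-of-$\phi$ argument sketched in Appendix \ref{phijm} for $\phi_i^{j,-}$, specialized to $j=0$. This produces a function harmonic on $\calD_0(\omega)^c$ satisfying the Neumann condition $n \cdot \nabla \phi_i^- = - n \cdot e_i$ on $\partial \calD_0(\omega)$ and the asymptotic compatibility $\nabla \phi_i^-(y,\omega) - \nabla \phi_i(y,\omega) \to 0$ as $|y| \to \infty$ within $\calD_0(\omega)^c$. Since the set $\calD_0(\omega)$ and both the PDE and the boundary datum do not involve $m_0(\omega) = (\theta_0,\rho_0,a_0)$, and since the $m_0$-dependence of $\nabla \phi_i(y,\omega)$ is asymptotically negligible (as recalled in the paragraph preceding the lemma via \fref{corr3m} and the observation about $\omega^0$), $\phi_i^-$ may be viewed as a function of $(\omega)_0$ only, justifying the notation $\phi_i^-(y,(\omega)_0)$.

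Next, for $j \in \Zm^3$, I would set $\psi_i(y) := \phi_i^{j,-}(y+j,\omega)$ and verify that $\psi_i$ solves the problem that characterizes $\phi_i^-(\cdot,(T_j \omega)_0)$. The translation identity $D^{k+j}(\omega) = D^k(T_j \omega) + j$, which follows directly from the definitions of $D^k$ and of $T_j$ (and from $[z] = 0$ for $z \in Z$), implies
$$
\calD_j(\omega) - j \;=\; \bigcup_{k \neq 0} D^{k+j}(\omega) - j \;=\; \bigcup_{k \neq 0} D^{k}(T_j \omega) \;=\; \calD_0((T_j \omega)_0),
$$
so that $\psi_i$ is harmonic on $\calD_0((T_j \omega)_0)^c$ and satisfies the correct Neumann condition on its boundary. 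The asymptotic condition follows from the stationarity of $\nabla \phi_i$, giving $\nabla \phi_i(y+j,\omega) = \nabla \phi_i(y,T_j \omega)$, combined with \fref{corr3m} applied to $\phi_i^{j,-}$:
$$
\un_{\calD_0((T_j\omega)_0)^c}(y)\big(\nabla \psi_i(y) - \nabla \phi_i(y,T_j\omega)\big) \;=\; \un_{\calD_j(\omega)^c}(y+j)\big(\nabla \phi_i^{j,-}(y+j,\omega) - \nabla \phi_i(y+j,\omega)\big) \to 0.
$$

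Finally, uniqueness up to an additive constant follows by a Liouville-type argument for the perforated domain: the difference of any two candidates for $\phi_i^-$ is harmonic on $\calD_0((\omega)_0)^c$, has vanishing normal derivative on its boundary, and has gradient tending to $0$ at infinity, hence is constant. Applied to $\psi_i$ and $\phi_i^-(\cdot,(T_j\omega)_0)$, this yields the identity $\nabla \phi_i^{j,-}(x,\omega) = \nabla \phi_i^-(x-j,(T_j\omega)_0)$. The main subtlety is establishing rigorously that $\phi_i^-$ is $(\omega)_0$-measurable; this is where one must carefully follow the perturbation construction deferred to Appendix \ref{phijm}, since \emph{a priori} the limiting behavior involves $\nabla \phi_i(y,\omega)$ which does depend on $m_0$. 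Once that point is granted, the translation identity is essentially a direct computation from the definitions of $T_j$ and $D^k$.
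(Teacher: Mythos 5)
Your proposal is correct and follows essentially the same route as the paper: both rest on the translation identity $\calD_j(\omega)=j+\calD_0((T_j\omega)_0)$, the stationarity of $\nabla\phi_i$, and uniqueness up to an additive constant of the modified-corrector problem (the paper shifts $\phi_i^-$ to cell $j$ and matches it against $\phi_i^{j,-}$, whereas you shift $\phi_i^{j,-}$ back to the origin — the same argument read in the opposite direction). Your flagged subtlety about $(\omega)_0$-measurability is the same point the paper addresses via the remark preceding the lemma and the estimates \fref{Dphi}--\fref{DE}.
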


  \begin{proof} The idea is simply to perform shifts to translate the $j$ cell to the origin. We start by rewriting $\calD_j(\omega)$ appropriately, and recall that
    $$
    \calD_j(\omega)=\bigcup_{k \in \Zm^3, k \neq j}D^k(\omega)= \bigcup_{k \in \Zm^3,\, k \neq j} B(\theta_k(\omega)+k -z(\omega),\rho_k(\omega)),
    $$
   We show first that $ \calD_j(\omega)=j+\calD_0(T_j\omega)$, with $\calD_0(\omega)=\calD_{j=0}(\omega)$. The notation $j+\calD_0(T_j\omega)$ means that the centers of the balls in $\calD_0(T_j\omega)$ are all shifted by $j$. We have indeed, since $[j+z(\omega)]=j$ and $z(T_j \omega)=z(\omega)$ when $j \in \Zm^3$ and $z(\omega) \in Z$,
      \bee
      \calD_j(\omega) 
      &=&\bigcup_{k \in \Zm^3,\, k \neq j} B(\theta_k(\omega)+j+k-j-z(\omega),\rho_k(\omega))\\
      &=&\bigcup_{k \in \Zm^3,\, k \neq 0}B(\theta_{k+[j+z(\omega)]}(\omega)+ j+k+[j+z(\omega)]-j-z(\omega),\rho_{k+[j+z(\omega)]}(\omega))\\
      &=&\bigcup_{k \in \Zm^3,\, k \neq 0}B(\theta_{k}(T_j\omega)+j+k-z(T_j\omega),\rho_{k}(T_j\omega))\\
      &=&j+\calD_0(T_j\omega).
      \eee
      Above, we used that 
       $(\theta_{k+[j+z(\omega)]}(\omega),\rho_{k+[j+z(\omega)]}(\omega))=(\theta_{k}(T_j\omega),\rho_{k}(T_j\omega)).$
      Note that $\calD_0(T_j\omega)$ does not depend on the zero component of $T_j \omega$ and we recast it as $\calD_0(T_j\omega)=\calD_0((T_j\omega)_0)$. For $i=1,2,3$, let now $\phi_i^-(y,(T_j\omega)_0)$ be the solution to $\Delta \phi^-_i=0$ on $\calD_0((T_j\omega)_0)^c $, with the boundary conditions
      $$
      n \cdot \nabla \phi^-_i=-n \cdot e_i \qquad \textrm{on} \qquad \partial \calD_0((T_j\omega)_0),
      $$
      and the condition at infinity $\lim_{|y| \to \infty} \un_{\calD_0((T_j \omega)_0)^c}(y) (\nabla \phi_i^{-}(y,(T_j \omega)_0))-\nabla \phi_i(y,T_j \omega))=0$.
      The function $\phi^-$ is built in the same manner as $\phi^{j,-}$ (see Appendix \ref{phijm}). 
      It is then clear that the shifted function $\phi_i^-(x-j,(T_j \omega)_0)$ satisfies \fref{corr1m}-\fref{corr2m}-\fref{corr3m}, which admits a unique solution up to an additive constant. As a consequence $\nabla \phi_i^-(x-j,(T_j \omega)_0)=\nabla\phi_i^{j,-}(x,\omega)$, which proves the claim.
      \end{proof}

      With the previous lemma at hand, we can now express $M^j(\omega)$ and $N^j(\omega)$ in terms of $M$ and $N$ defined in \fref{defMF}-\fref{defNF}. We need first to introduce some notation. Let $m=(m_\ell)_{\ell \in \Zm^3}$. Then $\tau_j m:=(m_{\ell+j})_{\ell \in \Zm^3}$ for $j \in \Zm^3$. For a subset $A$ of $J^\eta(z) \backslash \{j\}$, the notation $m^{A_\alpha-j,*}$ for $\alpha=\theta $ means that the component $(\theta_k,\rho_k)$ of $m$ for $k=\ell-j$ with $\ell \in A$ is replaced by an independent copy $(\theta_k',\rho_k')$, while when $\alpha=a$, $a_k$ is replaced by $a_k'$ and the entire $(\theta_\ell,\rho_\ell)_{\ell \in \Zm^3}$ is replaced by an independent copy. We have then:

      \begin{corollary} \label{corM} With the notations above, and $\omega=(m,z)$:
        \begin{align*}
          &M^j(\omega)=M(\tau_j m), \qquad  M^j(\omega^{A_\alpha,*})=M(\tau_j m^{A_\alpha-j,*})\\
          &N^j(\omega)=N(\tau_j m), \qquad N^j(\omega^{A_\alpha,*})=N(\tau_j m^{A_\alpha-j,*}).
        \end{align*}
        \end{corollary}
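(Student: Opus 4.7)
The plan is a direct change of variables that shifts the $j$-th cell to the origin, reducing $M^j(\omega)$ and $N^j(\omega)$ to $M$ and $N$ evaluated on the shifted parameters $\tau_j m$. For $\omega=(m,z)$, the integration set in $M^j(\omega)$ is $\partial D^j(\omega)=\partial B(\theta_j+j-z,\rho_j)$, while in $M(\tau_j m)$ it is $\partial B(\theta_0(\tau_j m),\rho_0(\tau_j m))=\partial B(\theta_j,\rho_j)$, which dictates the substitution $x=y+(j-z)$. This map preserves the surface measure and the outward normal, so only the integrands have to be carried through.

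First I would verify that $T_{j-z}\omega=(\tau_j m,0)$: since $z\in Z$ and $j\in\Zm^3$, one has $[z+(j-z)]=j$, and the definition of $T_x$ then gives $T_{j-z}\omega=((m_{k+j})_{k\in\Zm^3},0)=(\tau_j m,0)$. The stationarity of $\nabla\phi$ and of $\Lambda$ immediately yields $\nabla_x\phi_k(x,\omega)=\nabla_y\phi_k(y,\tau_j m)$ and $\Lambda(x,\omega)=\Lambda(y,\tau_j m)$ under the substitution. For the modified corrector, Lemma \ref{stat} gives $\nabla\phi^{j,-}_\ell(x,\omega)=\nabla\phi^-_\ell(x-j,(T_j\omega)_0)$; a further internal rigid translation by $z$ absorbs the residual $z$-dependence in $(T_j\omega)_0=((m_{k+j})_{k\neq 0},z)$ to produce $\nabla\phi^-_\ell(y,\tau_j m)$, this being legitimate because the ball family $\calD_0((\mathbf{n},z))$ translates rigidly in $z$ and $\phi^-$ is unique up to an additive constant.

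The one nontrivial point --- which I expect to be the main obstacle --- is that stationarity of $\phi$ determines its gradient but not its additive constant, so after substitution one finds $\phi_k(x,\omega)=\phi_k(y,\tau_j m)+c_k$ for some constant $c_k$, which together with $x_k=y_k+(j-z)_k$ contributes a spurious constant factor multiplying $n\cdot(e_\ell+\nabla\phi^-_\ell(y,\tau_j m))$ in the integrand of $M^j_{\ell,k}$. I would eliminate this by observing that $B:=B(\theta_0(\tau_j m),\rho_0(\tau_j m))$ lies inside $\calD_0((\tau_j m,0))^c$, where $\phi^-_\ell$ is harmonic, so the divergence theorem gives
\[
\int_{\partial B} n\cdot\bigl(e_\ell+\nabla\phi^-_\ell(y,\tau_j m)\bigr)\,dS(y) = \int_B \Delta\phi^-_\ell(y,\tau_j m)\,dy = 0,
\]
the extra constant drops out, and the remaining integral equals $M_{\ell,k}(\tau_j m)$. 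For $N^j(\omega)$ the issue does not arise and the substitution directly yields $N^j(\omega)=\int_{B(\theta_j,\rho_j)}(1+\Lambda(y,\tau_j m))\,dy=N(\tau_j m)$.

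For the replaced-copy configurations $\omega^{A_\alpha,*}$ the same argument applies verbatim once the indexing is tracked through the shift: a modification of cell $k\in A$ in $\omega$ corresponds, after $\tau_j$, to a modification of position $k-j\in A-j$ in the new enumeration, so $\tau_j m^{A_\alpha,*}=(\tau_j m)^{(A-j)_\alpha,*}$, which is exactly the object written $\tau_j m^{A_\alpha-j,*}$ in the statement. Applying the preceding analysis to the modified parameters then yields the four identities of the corollary from a single change of variables together with the divergence-theorem observation above.
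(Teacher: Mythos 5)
Your proposal is correct and follows essentially the same route as the paper's proof: shift the $j$-th cell to the origin via $T_{j-z(\omega)}\omega=(\tau_j m,0)$, invoke Lemma \ref{stat} and the stationarity of $\nabla\phi$ and $\Lambda$, and kill the residual constant (from the additive ambiguity of $\phi_k$ and the translation term $(j-z)_k$) by noting that $\int_{\partial B}n\cdot(e_\ell+\nabla\phi^-_\ell)\,dS=0$ since $\phi^-_\ell$ is harmonic in the emptied cell. The index bookkeeping for $\omega^{A_\alpha,*}$ is also handled as in the paper.
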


        \begin{proof}
         We start with $M^j(\omega)$. We have, after writing $D^j(\omega)=j-z(\omega)+B((\theta_j(\omega),\rho_j(\omega)))$, and using Lemma \ref{stat} as well as the stationarity of $\nabla \phi$ in the third equality,
      \bee
      M^j(\omega)&=&\int_{\partial D^j(\omega)} n^j \cdot (\nabla y_\ell + \nabla \phi^{j,-}_\ell(y,\omega)) \left(y_k+\phi_k (y,\omega) \right) dS(y)\\
      &=&\int_{\partial B(\theta_j(\omega),\rho_j(\omega))} n^j \cdot (\nabla y_\ell + \nabla \phi^{j,-}_\ell(y+j-z(\omega),\omega)) \\
      &&\qquad \times \left(y_k+(j-z(\omega))_k+\phi_k (y+j-z(\omega),\omega) \right) dS(y)\\
      &=&\int_{\partial B(\theta_j(\omega),\rho_j(\omega))} n^j \cdot (\nabla y_\ell + \nabla \phi^{-}_\ell(y,(T_{j-z(\omega)}\omega)_0) \\
      &&\qquad \times \left(y_k+(j-z(\omega))_k+\phi_k (y,T_{j-z(\omega)}\omega)+C_{j-z(\omega)}(\omega) \right) dS(y).
        \eee
        Besides, since $j \in \Zm^3$,
        $$
T_{j-z(\omega)} \omega= T_{j-z(\omega)} \big((m_k)_{k \in \Zm^3},z\big)=\big((m_{k+j})_{k \in \Zm^3},0\big)=(\tau_j m,0),
$$
and also $(\theta_{j}(\omega),\rho_{j}(\omega))=(\theta_{[j+z(\omega)]}(\omega),\rho_{[j+z(\omega)]}(\omega))=(\theta_{0}(T_j\omega),\rho_{0}(T_j\omega))$. Since $(\theta_0(\omega),\rho_0(\omega))$ does not depend on the $z$ component of $\omega$, we then write
$(\theta_{j}(\omega),\rho_{j}(\omega))=(\theta_{0}(\tau_j m),\rho_{0}(\tau_j m))$. In the same way, we write 
$\phi_k (y,\tau_j m)$ for $\phi_k (y,T_{j-z(\omega)}\omega)$.

Now, since  $\Delta \phi^{j,-}_\ell(y,\omega)=0$ on $D^j(\omega)$, it follows that
$$
\int_{\partial B(\theta_j(\omega),\rho_j(\omega))} n^j \cdot (\nabla y_\ell + \nabla \phi^{-}_\ell(y,(T_{j-z(\omega)}\omega))_0) dS(y)=0.
$$
As a consequence, the term proportional to $(j-z(\omega))_k+C_{j-z(\omega)}(\omega)$ in $M^j(\omega)$ vanishes and 
 \bee
 M^j(\omega)&=&\int_{\partial B(\theta_0(\tau_j m),\rho_0(\tau_j m))} n^j \cdot (\nabla y_\ell + \nabla \phi^{-}_\ell(y,(\tau_j m)_0)  \left(y_k+\phi_k (y,\tau_j m)\right) dS(y)\\
 &=&M(\tau_j m),
 \eee
 which proves the claim. Note that this also shows that $M$ is uniquely defined as adding a constant to $\phi_k$ produces a vanishing term.  
        When considering $M^j(\omega^{A_\alpha,*})$, some bookkeeping is necessary to keep track of the indices that are in $A$ and a similar calculation as above shows that
        $$
        M^j(\omega^{A_\alpha,*})=M(\tau_j m^{A_\alpha-j,*}).
        $$
        
             Similarly, since the function $\Lambda(x,\omega)$ is stationary, i.e. verifies $\Lambda(x,\omega)=\Lambda(x-j,T_j\omega)$, we find
             \bee
             N^j(\omega)&=&\int_{D^j(\omega)} (1+\Lambda(y,\omega))dy\\
             &=&\int_{B(\theta_j(\omega),\rho_j(\omega))} (1+\Lambda(x,T_{j-z(\omega)}\omega)) dx=N(\tau_jm).
             \eee
            The relation   $
        N^j(\omega^{A_\alpha,*})=N(\tau_j m^{A_\alpha-j,*})
        $ follows in the same manner. This ends the proof.
      \end{proof}
      
\medskip 
We can finally now start the analysis of $\E\{|U_\eta(g)|^2\}$. We begin with \fref{cov3}, and inject the expansions \fref{exp1}-\fref{exp2}. There are various terms to study, and we focus  on the following one: for $j \in J_\eta^\delta$, let
\bee 
\calC^{j,\eta}&=&\frac{a_b^2}{2}\sum_{A \subset J^\eta(z) \backslash \{j\}} \frac{1}{C_{|A|}^n (n-|A|)} \\
&& \qquad \times \E\big \{\left(M^j(\omega)-M^j(\omega^{j_\theta})\right)\otimes \left(M^j(\omega^{A_\theta,*})-M^j(\omega^{(A\cup j)_\theta,*})\right)\big \}.
\eee

Our goal is to show that $\calC^{j,\eta}$ is asymptotically independent of $j$ and $z$ and to identify the limit. A first immediate step towards this is to use Corollary \ref{corM}, which gives, for $A \subset J^\eta(z) \backslash \{j\}$ and $j \in J^\delta_\eta$, 
        \begin{align*}
          &\E\big \{\left(M^j(\omega)-M^j(\omega^{j_\theta})\right)\otimes \left(M^j(\omega^{A_\theta,*})-M^j(\omega^{(A\cup j)_\theta,*})\right)\big \}\\
          &= \E\big\{\left(M(\tau_j m)-M(\tau_j m^{j_\theta-j})\right) \otimes \left(M( \tau_j m ^{A_\theta-j,*})-M(\tau_j m^{(A\cup j)_\theta-j,*})\right)\big\}   \\
          &= \E\big\{\left(M(m)-M(m^{0_\theta})\right) \otimes \left(M( m ^{U_\theta,*})-M(m^{(U \cup 0)_\theta,*})\right)\big\}
        \end{align*}
        where $U \subset J_{\eta,j}(z)=\{\ell \in \Zm^3: \ell+j\in J^\eta(z) \backslash{\{j\}}\}$ (to be more precise, as $A$ runs through $J^\eta(z) \backslash \{j\}$, then $U$ runs through $J_{\eta,j}(z)$, which is all we need in the sequel). Above, we used the fact that the $(\theta_k,\rho_k,a_k)$ for $k \in \Zm^3$ are identically distributed to remove the translation operator $\tau_j$.

        For $A \subset J_{\eta,j}(z)$, let now
        $$
        F(A)=\E\big\{\left(M(m)-M(m^{0_\theta})\right) \otimes \left(M( m ^{A_\theta,*})-M(m^{(A \cup 0)_\theta,*})\right)\big\},
        $$
        so that $\calC^{ j,\eta}$ reads 
        \bee 
\calC^{j,\eta}=\frac{a_b^2}{2}\sum_{A \subset J_{\eta,j}(z)} \frac{1}{C_{|A|}^n (n-|A|)} F(A).
\eee
The term $\calC^{j,\eta}$ depends on $j$ and $z$ via the set $J_{\eta,j}(z)$. We show in the next proposition that, as $\eta \to 0$, the set $J_{\eta,j}(z)$ can be replaced by a set containing the indices of the cells that are included in a ball of radius of order $\eta^{-1}$ centered at the origin. That set does not depend on $j$, and these cells are away from the boundary $\calB$, which will take care of the dependency on $z$. The key part in the analysis is to show that inclusions that are sufficiently far away from the origin do not contribute to $\calC^{j,\eta}$ in average as $\eta \to 0$. 

\begin{proposition} For the $\delta>0$ introduced in Section \ref{setting}, let $I_\eta=\{ \ell \in \Zm^3,\, \ell  \neq 0: \ell+Z \subset B_{\delta \eta^{-1}/2}\}$, and denote by $N_\eta$ its cardinal. Then,
 $$ \calC^{j,\eta}=\frac{a_b^2}{2}\sum_{A \subset I_\eta} \frac{1}{C_{|A|}^{N_\eta} (N_\eta-|A|)} F(A)+O(\eta).
  $$
\end{proposition}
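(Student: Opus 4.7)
The goal is to show that only inclusions close to the origin contribute to $\calC^{j,\eta}$ up to an $O(\eta)$ error, so that the $j,z$-dependent set $J_{\eta,j}(z)$ may be replaced by the deterministic set $I_\eta$ and the weights $1/[C^n_{|A|}(n-|A|)]$ by $1/[C^{N_\eta}_{|A|}(N_\eta-|A|)]$. The argument has three parts.

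First, one verifies the geometric inclusion $I_\eta\subset J_{\eta,j}(z)$, which also yields $N_\eta\leq n-1$. Since $j\in J_\eta^\delta$, the physical cell $\eta(j-z(\omega)+Z)$ lies at distance at least $\delta$ from $\partial\calB$. For $\ell\in I_\eta$, the translated cell $\eta(\ell+j-z(\omega)+Z)$ is displaced by at most $\delta/2+O(\eta)$ from it, and so remains inside $\calB$; hence $\ell+j\in J^\eta(z)$.

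The central technical input is a decay-of-sensitivity estimate for $M$: one expects
\[
\E\bigl\{|M(m)-M(m^{\ell_\theta})|^2\bigr\}^{1/2}\leq g(|\ell|),
\]
with $g(\rho)$ decaying in $\rho$ fast enough that the contribution of the cells at lattice distance $\geq\delta/(2\eta)$ sums to $O(\eta)$. Such decay is inherited from the Nash-Aronson type bounds on the Green's function of the perforated Laplacian recalled in the Appendix: a perturbation of a distant inclusion at site $\ell$ propagates to the modified corrector $\phi^{-}$ near the origin only through the perforated Green's function evaluated at $\ell$, which decays algebraically in $|\ell|$.

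With this estimate in hand, write $F(A)=F(A\cap I_\eta)+[F(A)-F(A\cap I_\eta)]$. Expanding the remainder by a telescope of single-cell swaps along $A\setminus I_\eta$ and using Cauchy-Schwarz together with the decay estimate, the remainder contributes $O(\eta)$ after averaging with the binomial weights. The leading term is reorganized by $A=B\cup C$ with $B=A\cap I_\eta\subset I_\eta$ and $C\subset J_{\eta,j}(z)\setminus I_\eta$. The inner sum over $C$ is evaluated via the beta identity
\[
\frac{1}{C^n_k(n-k)}=\int_0^1 p^k(1-p)^{n-k-1}dp
\]
combined with the binomial theorem, and matches the target weight $1/[C^{N_\eta}_{|B|}(N_\eta-|B|)]$ up to a relative factor of the form $(N_\eta-|B|)/(N_\eta+1)=1+O(|B|/N_\eta)$, whose contribution is absorbed in the $O(\eta)$ error once one uses that the dominant range of $|B|$ is such that this factor is close to one.

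\textbf{Main obstacle.} The principal difficulty is the decay-of-sensitivity estimate, which requires a careful stability analysis of the modified corrector $\phi^{-}$ under perturbations of inclusions at distance $\sim1/\eta$ from the origin in the random perforated domain, leaning crucially on the Nash-Aronson bounds of the Appendix. The telescoping used afterwards produces $|A\setminus I_\eta|$ terms, whose total contribution must be controlled uniformly in the subset $A$; balancing the decay rate of $g$ against both the $O(\eta^{-3})$ cardinality of distant cells and the combinatorial weights is where the quantitative rate $O(\eta)$ is most delicate. The remaining combinatorial reorganization is mechanical.
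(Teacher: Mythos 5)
Your overall architecture matches the paper's: verify $I_\eta\subset J_{\eta,j}(z)$, telescope over the cells in $A\setminus I_\eta$ one swap at a time, and finish with a combinatorial rearrangement of the binomial weights. But the central quantitative step fails as you have set it up. The sensitivity of $M$ to resampling a cell $\ell$ is controlled through the representation formula \fref{repr} and the Green's function bounds, which give $|\Delta E^{\ell}|=O(\mathrm{dist}(0,D^\ell)^{-2})$ and $|\nabla \Delta E^{\ell}|=O(\mathrm{dist}(0,D^\ell)^{-3})$, hence at best $g(|\ell|)\sim|\ell|^{-2}$ for your $L^2$ decay-of-sensitivity bound. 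A Cauchy--Schwarz/triangle-inequality estimate then charges $\sum_{\ell\in I_\eta^c}g(|\ell|)\sim\int_{\delta\eta^{-1}/2}^{C\eta^{-1}}r^{-2}\,r^2\,dr=O(\eta^{-1})$ to the remainder --- not $O(\eta)$, and not even bounded; with the gradient decay $|\ell|^{-3}$ one still only gets $O(1)$. There is no algebraic rate available from the Nash--Aronson estimates that makes the distant cells summable to $O(\eta)$ by absolute values, so the "main obstacle" you identify cannot be overcome along the route you propose.

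The missing idea is a cancellation in expectation, not a decay estimate. The paper observes that the leading perturbation $\Delta M^0$ caused by resampling cell $\ell_1$ has \emph{zero mean} when averaged over the resampled pair (equation \fref{AV}, hence \fref{AV2}), while the leading approximation $M^0$ of the other factor is constructed (by replacing $\phi$ and $\phi^-$ with correctors that omit the inclusion at $\ell_1$) to be \emph{independent} of the $\ell_1$ data. Consequently $\E\{M^0\otimes\Delta M^0\}=0$ exactly, and the surviving error per swap is the product of two perturbations, $O(\eta^2)\times O(\eta^2)=O(\eta^4)$, which against the $O(\eta^{-3})$ cells in $I_\eta^c$ gives the desired $O(\eta)$. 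The paper states explicitly that without this cancellation the per-swap error would be $O(\eta^2)$ and the argument would not close. A secondary, lesser issue: your beta-integral treatment of the weights leaves a relative factor $(N_\eta-|B|)/(N_\eta+1)$ that you dismiss as close to one "in the dominant range of $|B|$," but the weight $1/[C^{N_\eta}_{q}(N_\eta-q)]$ actually puts its largest mass at $q$ near $N_\eta$, exactly where that factor is not close to one; the paper's direct combinatorial rearrangement avoids this.
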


\begin{proof} We remark first that $j+I_\eta \subset J_\eta(z)$ when $j \in J_\eta^\delta$ since $j$ is at least at a distance $\delta \eta^{-1}$ to the boundary of the domain.

\paragraph{Step 1: decomposing $J_{\eta,j}(z)$.} We split now $J_{\eta,j}(z)$ as $J_{\eta,j}(z)=I_\eta \cup I_\eta^c$ where $I_\eta^c$ is the complementary of $I_\eta$ in $J_{\eta,j}(z)$. Note that $I_\eta$ does not depend on $j$ while $I_\eta^c$ does. Recall that $n=|J_\eta(z)|$ and $N_\eta=|I_\eta|$. Since $|J_{\eta,j}(z)|=n-1$, it is clear that $|I_\eta|=n-1-N_\eta$. We have then
\bee
\frac{2}{a_b^2}\calC^{j,\eta}&=&\sum_{A \subset J_{\eta,j}(z)} \frac{1}{C_{|A|}^n (n-|A|)} F(A)=\frac{1}{n}\sum_{A \subset J_{\eta,j}(z)} \frac{1}{C_{|A|}^{n-1}} F(A)\\
&=&\frac{1}{n}\sum_{
\tiny{\begin{array}{l}
  A=A_1 \cup A_2\\
  A_1 \subset I_\eta,\; A_2 \subset I_\eta^c
\end{array}}
} \frac{1}{C_{|A|}^{n-1}} F(A_1 \cup A_2)\\
&=&\frac{1}{n}\sum_{m=0}^{n-1}\sum_{
\tiny{\begin{array}{l}
  A=A_1 \cup A_2, \; |A|=m\\
  A_1 \subset I_\eta,\; A_2 \subset I_\eta^c
\end{array}}
} \frac{1}{C_{m}^{n-1}} F(A_1 \cup A_2)
\eee

Consider $A_2 \subset I_\eta^c$ such that $|A_2|=p \geq 0$. We now remove the indices in $A_2$ one by one to attain the empty set. Let for this $\ell_i \in \Zm^3$, $i=1,\cdots,p$ be the indices in $A_2$.  We then write
\bee
F(A_1 \cup A_2)&=&F_(A_1 \cup A_2)-F( A_1 \cup (A_2\backslash \{\ell_1 \}))\\
&&+F( A_1 \cup (A_2\backslash \{\ell_1 \}))-F( A_1 \cup (A_2\backslash \{\ell_1,\ell_2 \}))+\cdots+F( A_1).
\eee
We need to show that, when removing an index $\ell_i$ in $A_2$, that is when replacing the component $m_{\ell_i}$ by another realization of the inclusion parameters, the value of $\calC^{\theta, j,\eta}$ as $\eta \to 0$ does not change. This requires us to investigate how sensitive are the corrector and modified corrector to changes in the inclusions parameters. We focus on $F(A_1 \cup A_2)-F( A_1 \cup (A_2\backslash \{\ell_1 \}))$ since the other terms are treated in the same manner.

\paragraph{Step 2: estimates on correctors.} We use the representation formula \fref{repr} obtained in Appendix \ref{phijm}:
\begin{align*} \nonumber
         &\phi_i(x,\omega)-\phi_i^{j,-}(x,\omega)\\\nonumber
                   &\qquad =\int_{ \partial D^j(\omega)} n^j_i \, G_j(x,y) dS+
           \int_{ \partial D^j(\omega)} \phi_i(y,\omega) \, n^j \cdot \nabla_y G_j(x,y) dS,
\end{align*}
where $G_j$ is the Green's function defined in Appendix \ref{phijm}. Given that the decay of $G_j$ and its derivatives are estimated in Appendix \ref{secG}, the latter formula gives us a way to quantify how sensitive is $\phi_i(x,\omega)$ to the inclusion $j$ when $x$ is large. In what follows, we recall that $\omega=(m,0)$, and we will write $\phi(y,m)$ for $\phi(y,\omega)$ with similar notation for $\phi^{j,-}$. For $\ell_1 \in A_2$, we rewrite the representation formula as
\be \label{expphi}
             \phi_i(x,m)=\phi_i^{\ell_1,-}(x,m)+E_i^{\ell_1}(x,m)
             \ee
             where
             \bee
             E_i^{\ell_1}(x,m)&=&\int_{\partial D^{\ell_1}(m)} G_{\ell_1}(x,y,m) n^{\ell_1}_i(y) dS\\
             &&\qquad +\int_{\partial D^{\ell_1}(m)} \partial_n G_{\ell_1}(x,y,m) \phi_i(y,m)dS,
             \eee
             where $\partial_n G_{\ell_1}=n^{\ell_1} \cdot \nabla G_{\ell_1}$. 
             In the same way, we have
       $$
             \phi_i(x,m^{\ell_1})=\phi_i^{\ell_1,-}(x,m^{\ell_1})+E_i^{\ell_1}(x,m^{\ell_1}).
             $$
             We recall that the notation $m^{\ell_1}$ means that the $\ell_1$ component of $m$ is replaced by an independent copy. Since there is no inclusion in the cell $\ell_1$, we have $\phi_i^{\ell_1,-}(x,m^{\ell_1})=\phi_i^{\ell_1,-}(x,m)$ and
             \be \label{Dphi}
              \phi_i(x,m)-\phi_i(x,m^{\ell_1})=\Delta E_i^{\ell_1}(x,m)=E_i^{\ell_1}(x,m)-E_i^{\ell_1}(x,m^{\ell_1}).
             \ee
             A similar relation holds for $\phi_i^-:$
             \be \label{Dphi2}
              \phi^-_i(x,m)-\phi^-_i(x,m^{\ell_1})=\Delta E_i^{\ell_1,-}(x,m)=E_i^{\ell_1,-}(x,m)-E_i^{\ell_1,-}(x,m^{\ell_1}),
             \ee
              where
              \bee
              E_i^{\ell_1,-}(x,m)&=&\int_{\partial D^{\ell_1}(m)} G_{\{0,\ell_1\}}(x,y,m) n^{\ell_1}_i(y) dS\\
              &&\qquad + \int_{\partial D^{\ell_1}(m)} \partial_n G_{\{0,\ell_1\}}(x,y,m) \phi^-_i(y,m)dS.
              \eee
              Above, $G_{\{0,\ell_1\}}$ is defined in the same manner as $G_{\ell_1}$ with now inclusions removed in both cells $\ell_1$ and $0$.         Denoting by $\E_{\ell_1}$ integration with respect to $(\theta_{\ell_1},\rho_{\ell_1})$  and $(\theta_{\ell_1}',\rho_{\ell_1}')$ when $\alpha=\theta$ and with respect to $a_{\ell_1}$  and $a_{\ell_1}'$ when $\alpha=a$, we find 
        \be
        \E_{\ell_1} \{\Delta E_i^{\ell_1}\} =  \E_{\ell_1} \{\Delta E_i^{\ell_1,-}\}=0. \label{AV}
        \ee
This relation will be crucially used further.
        


Besides, estimates \fref{estE}-\fref{estE2} of Appendix \ref{phijm} read in this context
        \bea \nonumber
        |E^{\ell_1}(x,m)| &\leq& C (\textrm{dist}(x, D^{\ell_1}(m)))^{-2} (1+\| \phi_i\|_{L^1(\partial D^{\ell_1}(m))})\\
             |\nabla E^{\ell_1}(x,m)| &\leq& C (\textrm{dist}(x,D^{\ell_1}(m)))^{-3} (1+\| \phi_i\|_{L^1(\partial D^{\ell_1}(m))}).  \label{DE}
             \eea
             When $x$ is in the cell around the origin, and since $\ell_1 \in A_2 $ is at least at a distance $\delta \eta^{-1}$ from the origin, it follows that
             \be E^{\ell_1}=O(\eta^2), \qquad\nabla E^{\ell_1}=O(\eta^3),\label{errorE}\ee
             with same relations for $E_i^{\ell_1,-}$, and this for all $m$.

             \paragraph{Step 3: averages.} We now use the results of the previous paragraph to treat $F(A_1 \cup A_2)-F( A_1 \cup (A_2\backslash \{\ell_1 \}))$. There are several similar terms in $F(A)$,
             and we focus on  $$
        \E\big\{M(m) \otimes M( m ^{A_\theta,*})\big\}
        $$
        since the other contributions follow analogously. 
             
             We plug \fref{Dphi} and \fref{Dphi2} with $m=m^{A_\theta,*}$ into $M$ and obtain
             $$
             M(m^{A_\theta,*})=M(m^{A_\theta \backslash{\{\ell_1}\},*})+\Delta M(m^{A_\theta,*}).
             $$
             The error $\Delta M$ contains a term proportional to $(\nabla \Delta E_i^{\ell_1,-}) \Delta E_i^{\ell_1}$, and two terms linear in $\Delta E^{\ell_1,-}_i$ and $ \Delta E_i^{\ell_1}$.  Based on \fref{errorE}, the quadratic term has a contribution of order $O(\eta^5)$. We have then
             \bee
             \Delta M_{\ell,k}(m^{A_\theta,*}) &=&\int_{\partial B(\theta_0(m^{A_\theta,*}),\rho_0(m^{A_\theta,*})} n \cdot \nabla \Delta E^{\ell_1,-}_\ell(y,m^{A_\theta,*})\\
             &&\qquad \times  \left( y_k+\phi_k (y,m^{A_\theta \backslash{\ell_1},*}) \right) dS\\
             &&+\int_{\partial B(\theta_0(m^{A_\theta,*}),\rho_0(m^{A_\theta,*}))} n \cdot (\nabla y_\ell + \nabla \phi^-_\ell (y,m^{A_\theta \backslash{\ell_1},*}))\\
             &&\qquad \times  \Delta E_k^{\ell_1}(y,m^{A_\theta,*}) dS+O(\eta^5).
             \eee

The first term is of order $O(\eta^3)$, and the second one of order $O(\eta^2)$ according to \fref{errorE}. We replace above $\phi_k$ by $\phi_k^{\ell_1,-}$ in the first term, creating an error of order $O(\eta^2)$ following \fref{expphi} and \fref{errorE}, which,  since $\nabla \Delta E^{\ell_1,-}_\ell$ is of order $O(\eta^3)$, produces an overall error of order $O(\eta^5)$. We replace in the same way $\nabla \phi^-_k$ by $\nabla \phi^{\{0,\ell_1\},-}_k$ (i.e. inclusions in cells $0$ and $j$ are removed) in the second term, creating an overall error of order $O(\eta^5)$. We then have
 \bee
             \Delta M_{\ell,k}(m^{A_\theta,*}) &=&\int_{\partial B(\theta_0(m^{A_\theta,*}),\rho_0(m^{A_\theta,*}))} n \cdot \nabla \Delta E_i^{\ell_1,-}(y,m^{A_\theta,*})\\
             &&\qquad \times  \left( y_k+\phi_k^{\ell_1,-} (y,m^{A_\theta \backslash{\ell_1},*}) \right) dS\\
             &&+\int_{\partial B(\theta_0(m^{A_\theta,*}),\rho_0(m^{A_\theta,*}))} n \cdot (\nabla y_\ell + \nabla \phi^{\{0,\ell_1\},-}_\ell (y,m^{A_\theta \backslash{\ell_1},*}))\\
             &&\qquad \times  \Delta E_i^{\ell_1}(y,m^{A_\theta,*}) dS+O(\eta^5)\\
             &:=&\Delta M_{\ell,k}^0(m^{A_\theta,*})+O(\eta^5).
             \eee
   
             Hence,
             $$
             M(m^{A_\theta,*})=M(m^{A_\theta \backslash{\{\ell_1}\},*})+\Delta M^0(m^{A_\theta,*})+O(\eta^5).
$$
            
            The term $\Delta M^0(m^{A_\theta,*})$, of order $O(\eta^2)$, is singled out since, according to \fref{AV}, 
             \be\label{AV2}
             \E_{\ell_1} \{ \Delta M^0(m^{A_\theta,*})\}=0,
             \ee
which will be important further.

We consider now the term $M(\omega)$ and replace in its definition $\phi_k$ by $\phi_k^{\ell_1,-}$, creating an error of order $O(\eta^2)$, and  $\nabla \phi^-_k$ by $\nabla \phi^{\{0,\ell_1\},-}_k$ creating an error of order $O(\eta^3)$. It follows that
             \bee
             M_{\ell,k}(m) &=&\int_{\partial B(\theta_0(m),\rho_0(m))} n \cdot \left(\nabla y_\ell +\nabla \phi_\ell^{\{0,\ell_1\},-} (y,m) \right)\\
             &&\qquad \times  \left( y_k+\phi_k^{\ell_1,-} (y,m) \right) dS+O(\eta^2)\\
             &:=&M^0_{\ell,k} (m)+O(\eta^2).
             \eee
             By construction, $M^0_{\ell,k}$ is independent of $(\theta_{\ell_1},\rho_{\ell_1},a_{\ell_1 })$ and $(\theta'_{\ell_1},\rho'_{\ell_1},a'_{\ell_1 })$, so that
             $$
             \E\{M^0(m) \otimes \Delta M^0(m^{A_\theta,*})\}=0.
             $$
             Hence,
             \bee
             \E\{M(m) \otimes (M(m^{A_\theta,*}-M(m^{A_\theta \backslash \{\ell_1\},*})\}&=&\E\{M^0(m) \otimes \Delta M^0(m^{A_\theta,*})\}+O(\eta^4)\\
             &=&O(\eta^4).
             \eee
This shows that removing the index $\ell_1$ in $A_2$ introduces an error of order $O(\eta^4)$ in $\E\{M(m) \otimes M(m^{A_\theta,*})\}$. The other terms in $F$ are treated in the same manner. The order $O(\eta^4)$ is crucial here for the conclusion, and without exploiting \fref{AV2}, the error would be of order $O(\eta^2)$ which would not be sufficient. 
             
             \paragraph{Step 4: conclusion.} There are $p$ indices to remove from $A_2$ to attain the empty set. We have just shown in the previous section that removing one index yields an error of order $O(\eta^4)$. Since $p \leq n-1-N_\eta=O(\eta^{-3})$, it follows
             This shows that, since $p \leq n=O(\eta^{-3})$
             $$
             F(A_1 \cup A_2)-F( A_1)=O(p \eta^4)=O(\eta).
             $$
             Hence, going back to $\calC^{j,\eta}$, 
             \bee
\frac{2}{a_b^2}\calC^{j,\eta}&=&\frac{1}{n}\sum_{m=0}^{n-1}\sum_{
\tiny{\begin{array}{l}
  A=A_1 \cup A_2, \; |A|=m\\
  A_1 \subset I_\eta,\; A_2 \subset I_\eta^c
\end{array}}
} \frac{1}{C_{m}^{n-1}} F(A_1 \cup A_2)\\
&=&\frac{1}{n}\sum_{m=0}^{n-1}\sum_{
\tiny{\begin{array}{l}
  A=A_1 \cup A_2, \; |A|=m\\
  A_1 \subset I_\eta,\; A_2 \subset I_\eta^c
\end{array}}
} \frac{1}{C_{m}^{n-1}} F(A_1)\\
&&+\frac{1}{n}\sum_{m=0}^{n-1}\sum_{
\tiny{\begin{array}{l}
  A=A_1 \cup A_2, \; |A|=m\\
  A_1 \subset I_\eta,\; A_2 \subset I_\eta^c
\end{array}}
} \frac{1}{C_{m}^{n-1}} (F(A_1 \cup A_2)-F(A_1)).
\eee
Since there are $C_m^{n-1}$ terms in the sum over $A$ above, it follows that this last term is of  order $O(\eta)$. Regarding the first one, we rearrange the sum so that
\begin{align*}
&\frac{1}{n}\sum_{m=0}^{n-1}\sum_{
\tiny{\begin{array}{l}
  A=A_1 \cup A_2, \; |A|=m\\
  A_1 \subset I_\eta,\; A_2 \subset I_\eta^c
\end{array}}
} \frac{1}{C_{m}^{n-1}} F(A_1 )\\
&=
\frac{1}{n}\sum_{q=0}^{N_\eta}\sum_{
\tiny{\begin{array}{l}
  A_1 \subset I_\eta,\; |A_1|=q
 \end{array}}}
                                                              \sum_{r=0}^{n-1-N_\eta}\frac{C^{n-1-N_\eta}_r}{C_{q+r}^{n-1}} F(A_1)\\
  &=
\frac{1}{N_\eta}\sum_{q=0}^{N_\eta}\sum_{
\tiny{\begin{array}{l}
  A_1 \subset I_\eta,\; |A_1|=q
 \end{array}}}
    \frac{1}{C_{q}^{N_\eta-1}} F(A_1 ).
\end{align*}
Above, we used that there are $C^{n-1-N_\eta}_r$ terms in the sum over $A_2$ when $|A_2|=r$, and combinatorial relations to go from the second to the third equality. This ends the proof by rearranging the sum. 
\end{proof}

\medskip 
We investigate now the limit of $\calC^{j,\eta}$ as $\eta \to 0$. For this, let
$$
T_{q}=\sum_{
\tiny{\begin{array}{l}
  A_1 \subset I_\eta,\; |A_1|=q
 \end{array}}}
\frac{1}{C_{q}^{N_\eta-1}} F(A_1)
$$
so that
$$
\calC^{j,\eta}=\frac{a_b^2}{2 N_\eta}\sum_{q=0}^{N_\eta} T_{q}+O(\eta).
$$
We will exploit the fact that
$$
\lim_{N \to \infty}\frac{1}{N} \sum_{q=0}^N c_q = \lim_{q \to \infty} c_q.
$$
As $\eta \to 0$, $N_\eta \to \infty$ and the set $I_\eta$  includes all indices in $\Zm^3 \backslash \{0\}$ when $\eta \to 0$. Hence, $F(A_1)$ becomes $F(\Zm^3 \backslash \{0\})$ in the limit. Since there are $C_{q}^{N_\eta-1}$ terms in the sum in $T_q$, it follows that $\lim_{q \to \infty} T_{q} =F(\Zm^3 \backslash\{0\})$ and as a consequence
\bee 
 \lim_{\eta \to 0}\calC^{j,\eta}&=&\frac{a_b^2}{2} F(\Zm^3 \backslash \{0\})\\
 &=&\frac{a_b^2}{2}\E\Big\{\left(M(m)-M(m^{0_\theta})\right) \otimes \left(M(m^{\Zm^3 \backslash \{0\}_\theta})-M(m^{\Zm^3_\theta})\right)\Big\},
 \eee
 where the notation $m^{\Zm^3 \backslash \{0\}_\theta}$  means that the components $(\theta_j)_{j \in \Zm^3_\star}$ are replaced by an independent copy, while $m^{\Zm^3_\theta}$ means that all $(\theta_j)_{j \in \Zm^3}$ are replaced by an independent copy. With
 \bee\langle M \rangle_{(\theta_0,\rho_0)}  &=&\int_{M_0^{\Zm^3_\star}} M(m) \otimes_{j \in \Zm^3_\star} d \nu_0(\theta_j,\rho_j) \\
 \langle M \rangle &=&\int_{M_0^{\Zm^3}} M(m) \otimes_{j \in \Zm^3} d \nu_0(\theta_j,\rho_j),
 \eee
 where $\langle M \rangle_{(\theta_0,\rho_0)}$ depends on $(\theta_0,\rho_0)$ and $(a_j)_{j \in \Zm^3}$, while $\langle M \rangle$ depends on  $(a_j)_{j \in \Zm^3}$, we find after a direct calculation
 \bee 
 \lim_{\eta \to 0}\calC^{j,\eta}
 &=& a_b^2 \, \E\big \{ (\langle M \rangle_{(\theta_0,\rho_0)} -\langle M \rangle)\otimes (\langle M \rangle_{(\theta_0,\rho_0)}-\langle M \rangle)\big\}.
 \eee
 This term gives the correlation $a_b^2 C^W$ defined in Section \ref{mainresult}. There are other terms in \fref{cov3} coming from the expansion of $\Delta_j^\alpha u_\eta(g,\omega)$.  The analysis is similar to the previous one, and we find for the cross term for $\alpha=\theta$,
 \begin{align*}
 &-\frac{1}{2}\E\Big\{\left(M(m)-M(m^{0_\theta})\right) \left(N(m^{\Zm^3 \backslash \{0\}_\theta})-N(m^{\Zm^3_\theta})\right)\Big\}\\
 &= -  \; \E \big \{ (\langle M \rangle_{(\theta_0,\rho_0)} -\langle M \rangle)\langle (N \rangle_{(\theta_0,\rho_0)}-\langle N\rangle) \big\},
\end{align*}
 which gives the $\calC^{W,N}$ of Section \ref{mainresult}. The terms quadratic in $N$ for $\alpha=\theta$ follow in the same manner and are not detailed. These yield  the $\calC^\theta$ and $\calC^{\theta,*}$ in Section \ref{mainresult}. Remains the correlation term when $\alpha=a$, and we find
\begin{align*}
 &\frac{1}{2}\E\Big\{\left(N(m)-N(m^{0_a})\right) \left(N(m^{\Zm^3 \backslash \{0\}_a,*})-N(m^{\Zm^3_a,*})\right)\Big\}\\
 &=  \E \big \{ (\E \{ N | a_0 \}-\E\{N\})(\E \{N | a_0 \}-\E\{N\} \big\},
\end{align*}
where $\E \{M | a_0 \}$ is the conditional average of $M$ on $a_0$. With the complex conjugate version we obtain $\calC^a$ and $\calC^{a,*}$.

To end the proof of Theorem \ref{th}, it suffices to observe that the only terms in \fref{cov3} that depend on $j$  as $\eta \to 0$ involve $G_{\h,g}(j \eta)$, $u_{\h}(j \eta)$ and their gradients. Then, sums of the form
$$
\eta^3 \sum_{j \in J_\delta^\eta} G_{\h,g}(j \eta) u_{\h}(j \eta)
$$
are Riemann sums for the integral
$$
\int_{\calB_\delta}G_{\h,g}(x) u_{\h}(x) dx,
$$
where $\calB_\delta$ denotes the set of points in $\calB$ at a distance at least of $\delta$ to $\partial \calB$. Sending $\delta$ to 0 then concludes the proof.
  
\section{Conclusion} We have investigated in this work the scaling limit of the centered wavefield $u_\eta-\E\{u_\eta\}$. We obtained the expression of the second-order statistics in the limit $\eta \to 0$. These correspond to those of the solution of the homogenized Helmholtz equation with white noise source terms. There are two contributions in the white noise: one arising from fluctuations in the position and radius of the inclusions, and one from fluctuations in the inverse permittivity. The limiting model can be used for instance in the resolution of some inverse problems involving sea ice.

There are several possible extensions of this work. The next natural step is to establish the Gaussian nature of the scaling limit. This requires additional statistical tools and will be addressed separately. Another one is to develop a fully rigorous mathematical theory, which is by no means an easy task. There are many questions to address, all of which we believe are open in the context of high contrast stochastic homogenization: higher order statistical integrability of the corrector and modified corrector; Nash-Aronson type estimates for gradients of Green's functions in random perforated domains; strong convergence of two-scale expansions. 

\begin{appendix}
\section{Two-scale expansions} \label{2scale}
We derive in this section two-scale expansions for $u_\eta$ and $G_\eta^{j,-}$. 
We follow \cite{felbacq2005negative} and start by writing the following expansion in $\calB$. In order to avoid boundary effects, we consider $x \in \calB$ such that $x$ is at least at a distance $\delta>0$ to the boundary of $\calB$. Close to the boundary, the expansion below has to be modified but this won't be needed in our analysis. With $y=x/\eta$ the fast variable, we have
      \begin{align}
      &u_\eta(x,y)=u_0(x,y)+\eta u_1(x,y)+\eta^2u_2^\eta(x,y) \label{u}\\
&a_\eta \nabla u_\eta=J_\eta= J_0(x,y)+\eta J_1(x,y)+\eta^2 J_2^\eta(x,y) \label{J}.
      \end{align}
     We have then from the Helmholtz equation
         \be
     \nabla \cdot J_\eta+k_0^2 u_\eta=0\label{helmJ}.
      \ee
      With $\nabla=\nabla_x+ \eta^{-1} \nabla_y$, plugging \fref{u}-\fref{J} in \fref{helmJ} and equating like powers of $\eta$ gives
       \begin{align}
      &\textrm{order}\; \eta^{-1}: \nabla_y \cdot J_0=0 \label{or0}\\
         &\textrm{order}\; \eta^{0}: \nabla_x \cdot J_0+\nabla_y \cdot J_1+k_0^2 u_0=0. \label{or1}
       \end{align}
From \fref{J}, we find on $D_\eta(\omega)^c \cap \calB$,
      \begin{align} \label{outD}
        &\textrm{order}\; \eta^{-1}:\quad   a_b \nabla_y u_0=0\\ \label{outD2}
          &\textrm{order}\; \eta^{0}:\quad a_b(\nabla_x u_0+\nabla_y u_1)=J_0,
      \end{align}
and on $D_\eta(\omega)$,
      \begin{align}
        &\textrm{order}\; \eta^{0}: J_0=0,  \label{eqJ}\\
        &\textrm{order}\; \eta^{1}:a_{\eta,\textrm{inc}}(x,\omega) \nabla_y u_0=J_1, \qquad \label{eqJ2}
      \end{align}
We recall that $a_{\eta,\textrm{inc}}(x,\omega)=a_j(\omega)$ for $x \in D^j(\omega)$.
    \paragraph{Continuity conditions.} The fluxes across the boundary of $D_\eta(\omega)$ are continuous, and denoting by $\nabla u_\eta^\pm$ the gradients at the outer/inner boundary of $D_\eta(\omega)$, we have
      $$
      a_{\eta,\textrm{inc}}(x,\omega) \eta^2 n\cdot \nabla u_\eta^-=a_b n \cdot \nabla u^+_\eta,
      $$
      where $n$ is the outward normal to the boundary. 
       In terms of the two-scale expansion, this gives:
      \be \label{BC1}
      \textrm{order}\; \eta^{-1}:n \cdot \nabla_y u_0^+=0\qquad \textrm{order}\; \eta^{0}: n \cdot \nabla_x u_0^+=-n \cdot \nabla_y u_1^+.
      \ee
      Moreover, $u_\eta$ is continuous over $\Rm^3$, and therefore $u_\eta^-=u_\eta^+$. This gives at first order: 
      \be \label{contU}
      u_0^-=u_0^+. 
      \ee
      
      \paragraph{First-order corrector on $D_\eta(\omega)^c$.} Let $\calB_\eta=\{y \in \Rm^3: \eta y \in \calB\}$ and $D_\eta^0=\cup_{j \in J_\eta(\omega)} D^j(\omega)$. Taking the $y$ divergence of \fref{outD2} and using \fref{or0}-\fref{outD} yields
\be \label{equ1}
      \Delta_y u_1=0 \qquad \textrm{on} \qquad D^0_\eta(\omega)^c \cap \calB_\eta.
      \ee
      We have in addition the second equation in \fref{BC1} at the exterior boundary of $D^0_\eta(\omega)$. Recall that $x$ is at a distance $\delta$ to $\partial \calB$, so that $y$ is at least  at  a distance $\delta \eta^{-1}$ to the boundary and boundary effects are ignored on $u_1$. 

      According to \fref{outD}, we have $u_0(x,y)=u_{\rm{h}}(x)$ on $D_\eta(\omega)^c \cap \calB$ for some function $u_{\rm{h}}$ to be determined. We then write $u_1(x,y)= \phi(y) \cdot \nabla u_{\rm{h}}(x)$ with $\phi=(\phi_1,\cdots,\phi_3)$ such that
      $$
      \Delta_y \phi_i=0 \qquad \textrm{on} \quad D(\omega)^c
$$
      and
$$
      n \cdot \nabla_y \phi_i=- e_i \cdot n \qquad \textrm{on} \qquad  \partial D(\omega).
$$
      Above, $e_i$ is a vector of the canonical basis of $\Rm^3$. The corrector $\phi_i$ is obtained as explained in Section \ref{setting}, 
      and the constructed $u_1$ satisfies \fref{equ1} and the second boundary condition in \fref{BC1}.

      In terms of $J_0$, setting
      $$
      J_0(x,y)=a_b\left\{
        \begin{array}{l}(I+(\nabla_y \phi_1(y),\cdots,\nabla_y \phi_3(y))) \nabla_x u_\h(x) \qquad \textrm{on}  \quad (\eta D(\omega))^c\\
          0 \qquad \textrm{on} \quad \eta D(\omega)
      \end{array}
      \right.
      $$
      produces a $J_0$ that satisfies \fref{or0}-\fref{outD2}-\fref{eqJ}. Above, $I$ is the $3\times 3$ identity matrix and $u_\h$ was extended to $\calB^c$ to some function to be determined. 

      We now identify $u_\h$ and need for this to characterize $u_0$ on $D_\eta(\omega)$.
      \paragraph{Homogenized solution.} Since $J_0=0$ on $\eta D(\omega)$ according to definition above,  \fref{or1} becomes $\nabla_y \cdot J_1+k_0^2 u_0=0$ on $D_\eta(\omega)$. Using the latter, and taking the $y$ divergence of \fref{eqJ2} yields
      $$
      a_{\eta,\textrm{inc}}(x,\omega) \Delta_y u_0 + k_0^2 u_0=0, \qquad \textrm{on} \quad D_\eta(\omega)
      $$
      where $a_{\eta,\textrm{inc}}(x,\omega)$ is equal to $a_j(\omega)$ on $D^j_\eta(\omega)$. Accounting for the continuity condition \fref{contU}, we then write $u_0(x,y)=u_{\rm{h}}(x) \Lambda(y,\omega)$ with $\Lambda(y,\omega)$ the solution to, for all $j \in \Zm^3$,
      \be \label{Lamb2}
      a_j(\omega) \Delta_y \Lambda + k_0^2 \Lambda=0, \qquad \textrm{in} \quad D^j(\omega),
      \ee
      with $\Lambda=1$ on the interior boundary of $D_j(\omega)$. The function $\Lambda$ is then extended to 1 on $D(\omega)^c$.

      The homogenized solution $u_{\rm{h}}$ is found by assuming that $u_\h$ is deterministic, by taking the expectation of \fref{or1} and by imposing the closure condition $ \lim_{\eta \to 0 }\E_\calP\{ \nabla_y \cdot J_1(x,x/\eta)\}=0$. We then find,
      $$
   \lim_{\eta \to 0}   \nabla_x \cdot \E_\calP\{ J_0\}+k_0^2 u_{\rm{h}} \E_\calP\{\Lambda\}=0, \qquad \textrm{in}\quad \calB.
      $$
    %
      The flux $J_0$ can be recast as
      $$
      J_0(x,y)=a_b \un_{D(\omega)^c }(y) \left( \nabla_x  u_{\rm{h}}(x)+ \nabla_y u_1(x,y)\right).
      $$
      with
      $$
      \E_\calP\{ J_0(x,y)\}=A_{\rm{eff}} \nabla_x u_{\rm{h}}(x)
      $$
      where the effective matrix $A_{\rm{eff}}$ is defined by
      $$
      A_{\rm{eff}}=a_b \E_\calP\left\{\un_{D(\omega)^c}(y)\big(I+(\nabla_y \phi_1(y),\cdots,\nabla_y \phi_d(y))\big)\right\}.$$
      According to Section \ref{setting}, this expression is equivalent to
      $$
      A_{\rm{eff}}=a_b \int_{\Sigma^c}(I+(v^{1}(\omega),\cdots,v^{3}(\omega))d\calP(\omega).
      $$
We have seen that equation \fref{Lamb2} can be solved exactly resulting in expression for the effective permeability $\mu_{\rm{eff}}$. 
      We then find, at a distance $\delta$ to the boundary of $\cal B$:
      $$
      \nabla_x \cdot A_{\rm{eff}} \nabla_x  u_{\rm{h}}+k_0^2 \mu_{\rm{eff}} u_{\rm{h}}=0.
      $$
      Since $\delta$ is arbitrary, the relation above is satisfied in the entire $\calB$. Outside of $\calB$, we have
      \be \label{out}
      \Delta_x  u_\eta+k_0^2 u_{\eta}=f,
      \ee
      equipped with Sommerfeld radiation condition. Continuity of $u_\eta$ and the flux across $\partial \calB$ yields on $\partial \calB$
      $$
   \lim_{\eta \to 0} u_\eta= u_{\rm{h}}, \qquad   \lim_{\eta \to 0} n \cdot \nabla u_\eta= n \cdot (A_{\rm{eff}} \nabla_x  u_{\rm{h}}).
      $$
      Defining $u^{\textrm{out}}=  \lim_{\eta \to 0} u_\eta$ on $\calB^c$, if follows that  $u^{\textrm{out}}$ satisfies \fref{out} and the continuity conditions above. Extending $u_\h$ by $u^{\textrm{out}}$ to the whole $\Rm^3$ then yields the homogenized system \fref{eqhomo}. 

      \paragraph{Two-scale expansions for $G_\eta^{j,-}$.} Proceeding as the previous section, we find that an expansion of the form
      $$
      G_\eta^{j,-}(x,x',\omega)= \Lambda^{j,-}(x/\eta,\omega) G_{\h}(x,x')+\eta \phi^{j,-}(x/\eta,\omega) \cdot \nabla_x G_{\h}(x,x')+O(\eta^2),
      $$
      cancels the first leading terms in a two-scale expansions in the Helmholtz equation satisfied by $G_\eta^{j,-}$. Above, $\phi^{j,-}$ is the modified corrector introduced in Section \ref{setting} and $\Lambda^{j,-}$ is defined in Section \ref{secSV}. It just remains to identify $G_{\h}(x,x')$. We have for this, at a distance $\delta$ to the boundary for $\cal B$:
\begin{align*}
  &\lim_{\eta \to 0}   \nabla_x \cdot \E_\calP\big\{J_0(x,x/\eta,x')\big\}+k_0^2 G_{\h}(x,x') \E_\calP\{\Lambda^{j,-}(x/\eta)\}=0,
\end{align*}
where the flux $J_0$ is now
$$
 J_0(x,y,x')=a_b \un_{\calD_j(\omega)^c }(y) \left(I+ (\nabla_y \phi_1^{j,-},\cdots, \nabla_y \phi_3^{j,-})(y)\right) \nabla_x  G_{\h}(x,x').
$$
The domain $\calD_j(\omega)$ is defined in Section \ref{setting}. The main difference with the previous section is that $J_0$ is not stationary as a function of $y$, and therefore the average $\E_\calP\big\{J_0\big\}$ depends on $x$. Yet, since the lack of stationarity is due to the absence  of an inclusion in the cell $j$  of volume of order $\eta^3$, the term $\E_\calP\big\{J_0\big\}$ becomes independent of position as $\eta \to 0$. The same holds for $\E_\calP\{\Lambda^{j,-}(y)\}$. To see this, we write
\bee
J_0(x,y,x')&=& a_b \un_{D(\omega)^c }(y) \left(I+ (\nabla_y \phi_1^{j,-},\cdots, \nabla_y \phi_3^{j,-})(y)\right) \nabla_x  G_{\h}(x,x')\\
&&+a_b \un_{D^j(\omega) }(y) \left(I+ (\nabla_y \phi_1^{j,-},\cdots, \nabla_y \phi_3^{j,-})(y)\right) \nabla_x  G_{\h}(x,x')\\
&:=&\widetilde{J}_0(x,y)+\delta J(x,y).
\eee
After integration w.r.t. $x$, the flux $\delta J(x,x/\eta)$ produces a term of order $\eta^3$ since the volume of $\eta D^j(\omega)$ is of order $\eta^3$. Hence, $\delta J(x,x/\eta)$ is negligible in the limit $\eta \to 0$.

     Regarding $\widetilde{J}_0$, we want to replace $\phi^{j,-}$ by $\phi$, and recall for this the estimate \fref{estE2} obtained in Appendix \ref{phijm}:
      \bee
      |(\nabla \phi_i^{j,-}-\nabla \phi_i)(x/\eta)| &\leq& C (\textrm{dist}(x/\eta,\eta D^j(\omega))^{-3} \int_{\eta \partial D^j(\omega)} (1+|\phi_i(y/\eta)|) dS^j(y)\\
      &\leq& C (\textrm{dist}(x/\eta,\eta D^j(\omega))^{-3} \eta^{2}.
      \eee
      Hence, setting $x$ at a distance at least $2 \eta$ from $\eta D^j(\omega)$ produces an error of order $\eta^2$ on $|(\nabla \phi_i^{j,-}-\nabla \phi_i)(x/\eta)|$. The volume of the domain where $x$ is at a distance less than $2 \eta$ to $\eta D^j(\omega)$ is of order $\eta^3$, and the corresponding term becomes negligible after integration w.r.t. $x$. Hence, replacing $\phi^{j,-}$ by $\phi$ in $\widetilde{J}_0$ produces a negligible error in the limit. Collecting all these results shows that
      \begin{align*}
        & \lim_{\eta \to 0}   \E_\calP\big\{J_0(x,x/\eta,x')\big\}\\
        &\qquad = \lim_{\eta \to 0} \E_\calP\big\{a_b \un_{\calD(\omega)^c }(x/\eta) \left(I+ (\nabla_y \phi_1,\cdots, \nabla_y \phi_3)(x/\eta)\right) \big\} \nabla_x  G_{\h}(x,x')\\
      &\qquad = A_\textrm{eff} \nabla_x  G_{\h}(x,x').
      \end{align*}
      In the same way, we find
      $$
       \lim_{\eta \to 0}  \E_\calP\{\Lambda^{j,-}(x/\eta)\}=\mu_{\textrm{eff}},
       $$
       so that, as expected,
       $$
      \nabla_x \cdot A_{\rm{eff}} \nabla_x G_{\h}(x,x') +k_0^2 \mu_{\rm{eff}} G_{\h}(x,x')=\delta(x-x') \qquad \textrm{on} \quad \calB.
      $$
The function $G_{\h}(x,x')$ is then extended to $\Rm^3$ accordingly to obtain \fref{homoGF}.

  \section{Results on Green's functions} \label{secG} Consider $\calD=\cup_{n=1}^\infty B_n \subset \Rm^3$, where the $(B_n)$ form a collection of non-intersecting open balls with radii in $[R_1, R_2]$, and with boundaries separated at least by distance $d>0$. 
  Let $G$ be the Green's function verifying
               $$
               -\Delta_x G(x,y)=\delta(x-y),\qquad x,y \in \calD^c, \qquad \partial_n G(x,y)=0 \qquad \textrm{on} \quad \partial \calD,
               $$
               with $\lim_{|x| \to \infty} G=0$. Above, $\partial_n G$ is the normal derivative on $\partial \calD$. Nash-Aronson estimates for the heat equation, see \cite{jikov2012homogenization}, Chapter 8, show, after integration over the time variable, that there is a constant
               $C>0$ depending only on $R_1,R_2$, such that
               \be \label{NA}
               0 \leq G(x,y) \leq C|x-y|^{-1}, \qquad \textrm{for all}\quad x,y \in \calD^c.
               \ee
               The above estimate applies to the case considered in this work  where $\calD=D(\omega)$, and we still denote by $G$ the corresponding Green's function. Gradient estimates are obtained using large-scale regularity results, and a rigorous theory is available in \cite{armstrong2019quantitative} in the case of stochastic homogenization of classical elliptic equations without the large contrast property. To the best of our knowledge, there is no such theory in the large contrast case, so we provide informal arguments leading to the expected estimates.

               We denote by $\mathfrak{G}_\h$ the homogenized Green's function satisfying
                $$
               -\nabla_x A_{\textrm{eff}} \nabla_x \frakG_\h(x,y)=\delta(x-y),\qquad x,y \in \Rm^3,
               $$
               with  $\lim_{|x| \to \infty} \frakG_\h=0$. When $x \neq y$, $\frakG_\h$ is a so-called a-harmonic function, see \cite{armstrong2019quantitative}, and behaves, along with its derivatives, in the same way as $|x-y|^{-1}$, namely
               \be \label{aharmo}
               |\frakG_\h(x,y)|+ |x-y| |\nabla_x \frakG_\h(x,y)|+|x-y|^2 |\nabla_y \nabla_x \frakG_\h(x,y)| \leq C |x-y|^{-1}.
               \ee

               Let $x_0,y \in \calD^c$ such that $4R=|x_0-y| \gg 1$, and consider a smooth function $\chi$ such that $\chi=1$ on $B_R(x_0)$ and $\chi=0$  on $B_{2R}(x_0)$. We have $|\partial_{x_i} \chi|+ R |\partial^2_{x_i x_j} \chi| \leq C R^{-1}$.  We have then the expression
               $$
               \chi(x)G(x,y)=\int_{\calD^c} \left( 2 \nabla \chi(x') \cdot \nabla_x G(x',y)+\Delta \chi(x') G(x',y)\right) G(x',x) dx',
               $$
               so that, when $x \in B_{R/2}(x_0)$,
               $$
               \partial_{x_i} G(x,y)=\int_{\calD^c} \left( 2 \nabla \chi(x') \cdot \nabla_x G(x',y)+\Delta \chi(x') G(x',y)\right) \partial_{x_i} G(x',x) dx'.
               $$
               For $\eps=R^{-1}$, consider the shifted rescaled Green's function $G^\eps(x,y)=G(R+R x,y)$, where $x$ is such that $x+R$ (with an abuse of notation $x+R$ means that all components of $x$ are shifted by $R$) belongs to the intersection the annulus of inner and outer radii $0$ and $1$ and $R^{-1} \calD^c$. We denote that set by $A_R$. We have then, for $x \in B_{R/2}(x_0)$.
               $$
               |\partial_{x_i} G(x,y)| \leq C R^2 \int_{A_R} |(\nabla_x G^\eps)(x',y)| |(\partial_{x_i} G^\eps)(x',x)| dx'+C \int_{A_R} |(\partial_{x_i} G^\eps)(x',x)| dx'.
                 $$
                 Above, we used \fref{NA} to control $G(x',y)$, the estimates on $\chi$ stated above, and the fact that $|x'-y|\geq C R$ when $x' \in B_{2R}(x_0)$. After averaging over $A_R$ (which is of size $O(1)$), we can use the two-scale expansion of $\nabla _x G^\eps$, which reads, for $x' \in A_R$,
                 $$
                 \nabla_x G^\eps(x',y) \simeq Q(R+Rx',\omega) \nabla_x \frakG(R+x',y).
                 $$
                 The term $Q$ is defined in \fref{defAB}, and the two-scale expansion of $G$ is obtained in a similar way as that of $u_\eta$. Using \fref{aharmo}, we have, since $|R+x'-y| \geq C R$ when $x' \in A_R$ and $|R+x'-x| \geq C R$ when $x' \in A_R$ and $x \in B_{R/2}(x)$,  $|\nabla_x G^\eps(x',y)|+ |\nabla_x G^\eps(x',x)|=O(R^{-2})$, and as a consequence
                  $$|\partial_{x_i} G(x,y)| =O(R^{-2})=O(|x_0-y|^{-2})=O(|x-y|^{-2}), \qquad x \in B_{R/2}(x_0).$$
                  Estimates on $\nabla_y \nabla_x G$ follow in the same manner, and we find $|\nabla_y \nabla_x G(x,y)|=O(|x-y|^{-3})$.

                  When removing inclusions in $D(\omega)$, say in the cell $j$, we use the two-scale expansion
                  $$
                 \nabla_x G^\eps(x',y) \simeq Q^{j,-}(R+Rx',\omega) \nabla_x \frakG(R+x',y),
                 $$
                 where $Q^{j,-}$ is defined in \fref{2sGum}, and the estimates follow in the same way of above. The two-scale expansion follows the derivation of that of $G^{j,-}_\eta$, see Appendix \ref{2scale}.

  \section{The modified corrector $\phi^{j,-}$} \label{phijm}
  We show in this section how to construct a solution to \fref{corr1m}-\fref{corr2m}-\fref{corr3m} and show it is unique up to an additive constant. We also derive a helpful representation formula for the modified corrector.

  \paragraph{Existence.} Let $\chi$ be a smooth function equal to one outside of the cell $j$ and equal to zero on the closure of $D^j(\omega)$. For $i=1,2,3$, we look for a solution under the form $\phi_i^{j,-}(y,\omega)=\chi \phi_i(y,\omega)+u$. Based on \fref{corr1m}-\fref{corr2m}, the function $u$ verifies
 $$
      \Delta u=-2 \nabla \chi \cdot \nabla \phi_i-\phi_i \Delta \chi \qquad \textrm{on} \quad \calD_j(\omega)^c
      $$
      and, for $\ell \neq j$,
      $$
      n^\ell \cdot \nabla u=0 \qquad \textrm{on} \qquad  \partial D^\ell(\omega).
      $$
      According to \fref{corr3m}, $u$ should satisfy the following condition at infinity:
       \be \label{infu}
      \lim_{|y| \to \infty} \un_{\calD_j(\omega)^c} \nabla u=0.
      \ee

Such a  function $u$ can be obtained by using the perforated Green's function studied in the previous section. Let then $G_j$ be defined by, for $y \in \calD_j(\omega)^c$,
      $$
      \Delta_x G_j(x,y) = - \delta(x-y) \qquad \textrm{on} \quad \calD_j(\omega)^c
      $$
      and, for $\ell \neq j$,
      $$
      n^\ell \cdot \nabla_xG_j(x,y)=0 \qquad \textrm{on} \qquad  \partial D^\ell(\omega),
      $$
      with $\lim_{|x| \to \infty} \un_{\calD_j(\omega)^c} G_j=0$. Then $u$ reads, for $x \in \calD_j(\omega)^c$, 
      $$
      u(x)=\int_{ \calD_j(\omega)^c} (2 \nabla \chi \cdot \nabla \phi_i+\phi_i \Delta \chi)(y) G_j(x,y) dy. 
      $$
      Since $\nabla \chi$ and $\Delta \chi$ are supported in the cell $j$ and $\nabla G_j(x,y) \to 0$ as $|x| \to \infty$ according to Appendix \ref{secG}, it follows that $u$ verifies \fref{infu}. The function $\phi_i+\chi u$ then gives a solution to \fref{corr1m}-\fref{corr2m}-\fref{corr3m}.

     \paragraph{Representation formula.}     We will use a different representation formula for $\phi^{j,-}$. We find first, after an integration by parts:
  \bee
       u(x)&=&\int_{ \calD_j(\omega)^c} \nabla \chi \cdot \nabla \phi_i  G_j(x,y) dy-
       \int_{ \calD_j(\omega)^c} \phi_i \nabla \chi \cdot \nabla_y G_j(x,y) dy. 
       \eee
       Setting for $\chi$ a sequence of smooth functions converging to $\un_{D( \omega)^c}$, we find in the limit
 \bee
       u(x)&=&\int_{ \partial D^j( \omega)} n^j \cdot \nabla \phi_i(y, \omega) \, G_j(x,y) dS-
       \int_{ \partial D^j( \omega)} \phi_i(y, \omega) \, n^j \cdot \nabla_y G_j(x,y) dS. 
       \eee
In terms of $\phi_i^{j,-}$, this finally gives, on $D(\omega)^c$:
       \begin{align} \nonumber
         &\phi_i^{j,-}(x,\omega)-\phi_i(x, \omega)\\\nonumber
         & =\int_{ \partial D^j( \omega)} n^j \cdot \nabla \phi_i(y, \omega) \, G_j(x,y) dS-
           \int_{ \partial D^j( \omega)} \phi_i(y, \omega) \, n^j \cdot \nabla_y G_j(x,y) dS\\\label{repr}
         &=-\int_{ \partial D^j( \omega)} n^j_i \, G_j(x,y) dS-
           \int_{ \partial D^j( \omega)} \phi_i(y, \omega) \, n^j \cdot \nabla_y G_j(x,y) dS.
       \end{align}
       Above, we used that $n^j \cdot \nabla \phi_i(y, \omega)= -n_i^j$ on $D^j( \omega)$. Denote by $-E(x)$ the term in \fref{repr}. Since $\int_{ \partial D^j( \omega)} n^j_i (y)dS(y)=0$, the first term in $E$ can be recast as
       $$
       \int_{ \partial D^j( \omega)} n^j_i \, (G_j(x,y)-G_j(x,\theta_j(\omega)+j)) dS.
       $$
       We recall that $D^j(\omega)$ is centered at $\theta_j(\omega)+j$. Let $x \in D(\omega)^c$, $y \in \partial D^j( \omega)$ and let $e=(y-\theta_j(\omega)-j)/|y-\theta_j(\omega)-j|$. Denote by $\partial_e G_j=e \cdot \nabla_y G_j(x,y)$ the directional derivative along $e$. There exists a $y^*$ in the segment joining $y$ and $\theta_j(\omega)+j$ such that
       $$
       G_j(x,y)-G_j(x,\theta_j(\omega)+j)= |y-\theta_j(\omega)-j| \partial_e G(x,y^*).
       $$
       According to the gradient estimate on $G_j$ given in Appendix \ref{secG}, we find
       \bee
       |G_j(x,y)-G_j(x,\theta_j(\omega)+j)| 
       &\leq& C \rho_j(\omega) |x-y^*|^{-2} \\
       &\leq& C \textrm{dist}(x,D^j(\omega))^{-2}.
       \eee
       This, and using again the gradient estimate on $G_j$ finally yields, for $x \in D(\omega)^c$,
       \be \label{estE}
       |E(x)| \leq C \textrm{dist}(x,D^j(\omega))^{-2} (1+ \|\phi_i\|_{L^1(\partial D^j( \omega))}).
       \ee
       A similar calculation shows that
        \be \label{estE2}
       |\nabla E(x)| \leq C \textrm{dist}(x,D^j(\omega))^{-3} (1+ \|\phi_i\|_{L^1(\partial D^j( \omega))}).
       \ee

       \paragraph{Uniqueness.} We now turn to the proof of uniqueness up to an additive constant. Let $\varphi$ be the difference between two possible solutions, which verifies $\Delta \varphi=0$ on $ \calD_j(\omega)^c$, $n^\ell \cdot \nabla \varphi=0$ on  $\partial D^\ell(\omega)$ for $\ell \neq j$, and $\lim_{|x| \to \infty} \un_{\calD^j(\omega)^c} \nabla \varphi=0$. Following Rellich estimates, see \cite{ammari2007polarization}, Corollary 2.20, it follows that the whole gradient $\nabla \varphi$ vanishes on $\partial D^\ell(\omega)$ in addition to the normal derivative. We apply now the maximum principle to $\partial_{x_i} \varphi$, $i=1,2,3$. For $R>0$, let $C_R$ be a smooth contour in $\calD_j(\omega)^c$ enclosing the ball of radius $R$ centered at the origin. The maximum principle implies that a non-vanishing maximum of $\partial_{x_i} \phi$ can only be achieved on $C_R$, unless $\partial_{x_j}\phi$ is constant. Since $\lim_{|x| \to \infty} \un_{\calD^j(\omega)^c} \partial_{x_i} \varphi=0$, this shows that $\partial_{x_i }\varphi$ is constant for $i=1,2,3$, and that constant has to be zero since $\partial_{x_i} \varphi =0$ on $\partial D^\ell(\omega)$. Hence $\varphi$ is constant and there is just one solution up to an additive constant. 
    \end{appendix}
    \bibliographystyle{siam}
  \bibliography{../bibliography} 

\begin{thebibliography}{10}

\bibitem{ammari2007polarization}
{\sc H.~Ammari and H.~Kang}, {\em Polarization and moment tensors: with
  applications to inverse problems and effective medium theory}, vol.~162,
  Springer Science \& Business Media, 2007.

\bibitem{AmMoskVog02}
{\sc H.~Ammari, S.~Moskow, and M.~S. Vogelius}, {\em Boundary integral formulae
  for the reconstruction of electric and electromagnetic inhomogeneities of
  small volume}, ESAIM Control Optim. Calc. Var., 9 (2003), pp.~49--66.

\bibitem{armstrong2017additive}
{\sc S.~Armstrong, T.~Kuusi, and J.-C. Mourrat}, {\em The additive structure of
  elliptic homogenization}, Inventiones mathematicae, 208 (2017),
  pp.~999--1154.

\bibitem{armstrong2019quantitative}
\leavevmode\vrule height 2pt depth -1.6pt width 23pt, {\em Quantitative
  stochastic homogenization and large-scale regularity}, vol.~352, Springer,
  2019.

\bibitem{armstrong2016quantitative}
{\sc S.~N. Armstrong and C.~K. Smart}, {\em Quantitative stochastic
  homogenization of convex integral functionals}, in Annales scientifiques de
  l'Ecole normale sup{\'e}rieure, vol.~49, Societe Mathematique de France,
  2016, pp.~423--481.

\bibitem{BP-AA}
{\sc G.~Bal and O.~Pinaud}, {\em {Small volume expansions for elliptic
  equations}}, {Asymptotic Analysis}, 70 (2010), pp.~13--50.

\bibitem{bouchitte2015resonant}
{\sc G.~Bouchitt{\'e}, C.~Bourel, and L.~Manca}, {\em Resonant effects in
  random dielectric structures}, ESAIM: Control, Optimisation and Calculus of
  Variations, 21 (2015), pp.~217--246.

\bibitem{CMV-IP}
{\sc D.~J. Cedio-Fengya, S.~Moskow, and M.~S. Vogelius}, {\em {Identification
  of conductivity imperfections of small diameter by boundary measurements.
  Continuous dependence and computational reconstruction}}, Inverse Problems,
  14 (1998), pp.~553--594.

\bibitem{chatterjee2008new}
{\sc S.~Chatterjee}, {\em A new method of normal approximation}, Annals of
  probability, 36 (2008), pp.~1584--1610.

\bibitem{cherdantsev2019stochastic}
{\sc M.~Cherdantsev, K.~Cherednichenko, and I.~Vel{\v{c}}i{\'c}}, {\em
  Stochastic homogenisation of high-contrast media}, Applicable Analysis, 98
  (2019), pp.~91--117.

\bibitem{duerinckx2020robustness}
{\sc M.~Duerinckx, A.~Gloria, and F.~Otto}, {\em Robustness of the pathwise
  structure of fluctuations in stochastic homogenization}, Probability theory
  and related fields, 178 (2020), pp.~531--566.

\bibitem{duerinckx2020structure}
\leavevmode\vrule height 2pt depth -1.6pt width 23pt, {\em The structure of
  fluctuations in stochastic homogenization}, Communications in Mathematical
  Physics, 377 (2020), pp.~259--306.

\bibitem{felbacq2005negative}
{\sc D.~Felbacq and G.~Bouchitt{\'e}}, {\em Negative refraction in periodic and
  random photonic crystals}, New Journal of Physics, 7 (2005), p.~159.

\bibitem{gloria2015corrector}
{\sc A.~Gloria and F.~Otto}, {\em The corrector in stochastic homogenization:
  optimal rates, stochastic integrability, and fluctuations}, arXiv preprint
  arXiv:1510.08290,  (2015).

\bibitem{gu2016scaling}
{\sc Y.~Gu and J.-C. Mourrat}, {\em Scaling limit of fluctuations in stochastic
  homogenization}, Multiscale Modeling \& Simulation, 14 (2016), pp.~452--481.

\bibitem{jikov2012homogenization}
{\sc V.~V. Jikov, S.~M. Kozlov, and O.~A. Oleinik}, {\em Homogenization of
  differential operators and integral functionals}, Springer Science \&
  Business Media, 2012.

\bibitem{JP-DCDS}
{\sc W.~Jing and O.~Pinaud}, {\em A backscattering model based on corrector
  theory of homogenization for the random {H}elmholtz equation}, DCDS-B, 24
  (2019), pp.~5377--5407.

\end{thebibliography}
\end{document}